\def\titlerunning#1{\gdef\titrun{#1}}
\def\author#1{\gdef\autrun{\def\and{\unskip, }#1}\gdef\@author{#1}}
\def\address#1{{\def\and{\\\hspace*{18pt}}\renewcommand{\thefootnote}{}%
\footnote {#1}}%
\markboth{\autrun}{\titrun}}
\def\email#1{e-mail: #1}
\def\keywords#1{\par\medskip
\noindent\textbf{Keywords.} #1}
\newtheorem{thm}{Theorem}[section]
\newtheorem{cor}[thm]{Corollary}
\newtheorem{lem}[thm]{Lemma}
\newtheorem{prop}[thm]{Proposition}
\theoremstyle{definition}
\newtheorem{defin}[thm]{Definition}
\newtheorem{rem}[thm]{Remark}
\numberwithin{equation}{section}
\newcommand{\R}{\mathbb{R}}
\newcommand{\C}{\mathbb{C}}
\newcommand{\T}{\mathbb{T}}
\newcommand{\Z}{\mathbb{Z}}
\newcommand{\N}{\mathbb{N}}
\newcommand{\cc}{\mathbb{\mathcal C}}
\newcommand{\gfqi}{$gfqi$ }
\newcommand{\ham}{\textnormal{Ham}}
\newcommand{\pd}{\textnormal{\small PD}}
\newcommand{\nbd}{neighbourhood }
\newcommand{\im}{\textnormal{Im}\,}
\newcommand{\cqfd}{\hfill $\square$ \vspace{0.1cm}\\ }
\newcommand{\id}{\textnormal{Id}\,}
\newcommand{\priv}{\backslash}
\newcommand{\grad}{\vec{\nabla}}
\newcommand{\supp}{\text{Supp}\,}
\DeclareMathOperator{\area}{area}
\DeclareMathOperator{\Vol}{Vol}
\DeclareMathOperator{\Red}{Red}
\DeclareMathOperator{\Ham}{Ham}
\begin{document}

%%%%% To ease editing, add:

%\baselineskip=17pt

%%%%%%%%%%%%%%%%

%% In the running head, give an abbreviation of the title. 
\titlerunning{Coisotropic camel}

\title{Middle dimensional symplectic rigidity and its effect on Hamiltonian PDEs}

\author{Jaime Bustillo}

\date{}

\maketitle

\address{J. Bustillo: D\'epartement de Math\'ematiques et Applications,
	\'Ecole Normale Sup\'erieure, CNRS, PSL Research University,
	45 rue d'Ulm, 75005 Paris, France; \email{bustillo@dma.ens.fr}}

%%%%%%%%

\begin{abstract}
In the first part of the article we study Hamiltonian diffeomorphisms of $\R^{2n}$ which are generated by sub-quadratic Hamiltonians and prove a middle dimensional rigidity result for the image of coisotropic cylinders. The tools that we use are Viterbo's symplectic capacities and a series of inequalities coming from their relation with symplectic reduction. In the second part we consider the nonlinear string equation and treat it as an infinite-dimensional Hamiltonian system. In this context we are able to apply Kuksin's approximation by finite dimensional Hamiltonian flows and prove a PDE version of the rigidity result for coisotropic cylinders. As a particular example, this result can be applied to the Sine-Gordon equation.

%% Keywords are optional
\keywords{Symplectic geometry, generating functions, symplectic capacities, Hamiltonian PDEs.}
\end{abstract}

\section{Introduction}
Consider $\C^n=\R^{2n}$ with coordinates given by $(q_1,p_1,\dots,q_n,p_n)$ and let $\omega=\sum_i dq_i\wedge dp_i$ be the standard symplectic form. Gromov's non-squeezing theorem \cite{gromov1985pseudo} states that symplectic diffeomorphisms of $(\C^{n},\omega)$ cannot send balls of radius $r$ into symplectic cylinders of radius $R$ unless $r\leq R$. For example if $\phi$ is a symplectic diffeomorphism, $B^{2n}_r$ is the ball of radius $r$ in $\C^n$ and $B^2_R\subseteq \C$ is the two dimensional disc of radius $R$ then $$\phi(B^{2n}_r)\subseteq B^2_R\times \C^{n-1}\quad\text{implies}\quad r\leq R.$$ The original proof relied on the technique of pseudo-holomorphic curves which was later used to prove a wide range of results in symplectic geometry. Shortly after, several authors \cite{ekeland1990symplectic,hofer1990new,viterbo1992symplectic} gave independent proofs of Gromov's theorem using the concept of symplectic capacities. A symplectic capacity is a function $c:\mathcal{P}(\C^n)\rightarrow [0,+\infty]$ that verifies the following properties:\begin{enumerate}
	\item(monotonicity) If $U\subseteq V$ then $c(U)\leq c(V)$.
	\item(conformality) $c(\lambda U)=\lambda^2c(U)$ for all $\lambda\in \R$.
	\item(symplectic invariance) If $\phi:\C^n\rightarrow \C^n$ is a symplectic diffeomorphism then $c(\phi(U))=c(U)$.
	\item(non-triviality+normalization) $c(B^{2n}_1)=\pi=c(B^2_1\times \C^{n-1})$.
\end{enumerate}
Together, the existence of a function with these properties implies Gromov's theorem. In this article we are going to work with Viterbo's capacities in order to prove a rigidity result for a particular type of Hamiltonian diffeomorphisms. More precisely we are interested in the middle dimensional symplectic rigidity problem. 

One of the first questions regarding this problem appeared in \cite{hofer1990capacities} where Hofer asked about the generalization of capacities to middle dimensions. He asked if there exists a k-intermediate symplectic capacity $c^k$ satisfying monotonicity, k-conformality, symplectic invariance and $$c^k(B^{2k}_1\times \C^{n-k})<+\infty\quad\text{ but }\quad c^k(B^{2k-2}_1\times\C\times\C^{n-k})=+\infty?$$ In \cite{guth2008symplectic} Guth gave a partial answer to this question. He studied embeddings of polydisks an proved that $k$-capacities that verify the the following continuity property: $$\lim_{R\rightarrow \infty} c^k(B^{2k}_1\times B_R^{2n-2k})<+\infty\quad\text{ but }\quad \lim_{R\rightarrow \infty}c^k(B^{2k-2}_1\times B_R^{2n-2k+2})=+\infty?$$ do not exist. The question of less regular capacities was recently answered in the negative by Pelayo and V\~u Ng\d{o}c in \cite{pelayo2015hofer} using in part the ideas in \cite{guth2008symplectic}. In their article they proved that if $n\geq 2$ then $B^2_1\times \C^{n-1}$ can be symplectically embedded into the product $B^{2n-2}_R\times \C$ for $R=\sqrt{2^{n-1}+2^{n-2}-2}$. In particular, by monotonicity and homogeneity, the value of the capacity on the left has to be greater than or equal to a constant times the value on the right. 

Another point of view for the middle dimensional problem comes from a reformulation of Gromov's non-squeezing theorem. In dimension $2$ symplectomorphims are the same as area preserving maps so in \cite{eliashberg1991convex} Eliashberg and Gromov  pointed out that (using a theorem of Moser about the existence of area preserving diffeomorphisms) Gromov's theorem is equivalent to $$\area(\Pi_1\phi(B^{2n}_r))\geq \pi r^{2}\quad\text{for every symplectomorphism $\phi$}.$$ Denote by $\Pi_k$ the projection on the first $2k$ coordinates. A possible generalization of this statement to higher dimensions is $$\Vol(\Pi_k\phi(B^{2n}_r))\geq  \Vol(\Pi_kB^{2n}_r)= \Vol(B^{2k}_r)\quad\text{for every symplectomorphism $\phi$}.$$ This problem was studied by Abbondandolo and Matveyev in \cite{abbondandolo2013shadow}. In their article they proved that the volume with respect to $\omega^{\wedge k}$ of $\Pi_k\phi(B^{2n}_r)$ can be made arbitrarily small using symplectomorphisms. This ruled out the existence of middle dimensional volume symplectic rigidity for the ball. Nevertheless they proved that the rigidity exists in the linear case and, shortly after, several local results appeared: In \cite{rigolli2015middle} Rigolli proved that there is local middle dimensional volume rigidity if one restricts the class of symplectomorphisms to analytic ones, and in \cite{abbondandolo2018sharp} Abbondandolo, Bramham, Hryniewicz and Salom\~ao proved that the same kind of local rigidity appears for smooth symplectomorphisms in the case $k=2$.

We would like to point out another possible middle dimensional generalization of the squeezing problem. In dimension $2$ the value of any symplectic capacity on topological discs coincides with the area, so one may also rewrite Gromov's theorem as $$c(\Pi_1\phi(B^{2n}_r))\geq \pi r^{2}\quad\text{for every symplectomorphism $\phi$},$$ where $c$ is a symplectic capacity. One can then ask if this inequality is true with $\Pi_1$ replaced by $\Pi_k$, and more generally look at subsets $Z$ different from $B^{2n}_r$ and replace $\pi r^2$ with the capacity of $\Pi_kZ$. We prove that this type of inequality is true for $Z= X\times\mathbb{R}^{n-k}\subseteq \mathbb{C}^k\times \mathbb{C}^{n-k}$ provided that we restrict the class of symplectomorphisms. The maps the we consider are Hamiltonian diffeomorphisms $\psi=\psi_1^H$ generated by sub-quadratic Hamiltonians $H$, that is, by functions $H$ that verify $$\lim_{\lvert z\rvert\rightarrow +\infty}\frac{\lvert\nabla H_t(z)\rvert}{\lvert z\rvert}=0 \quad\text{uniformly in }t.$$ More precisely, if we denote by $c$ and $\gamma$ the two symplectic capacities defined by Viterbo in \cite{viterbo1992symplectic}, we have the following theorem:

\begin{thm}[Coisotropic camel theorem]\label{thm:coisotropic nonsqueezing}
	Let $X\subset\C^k$ be a compact set. Consider $X\times\mathbb{R}^{n-k}\subseteq \mathbb{C}^k\times \mathbb{C}^{n-k}$ and let $\psi=\psi^H_1$ be a Hamiltonian diffeomorphism of $\C^n$ generated by a sub-quadratic Hamiltonian $H$. Then $$c(X)\leq \gamma(\Pi_k(\psi(X\times\R^{n-k})\cap \C^k\times i\R^{n-k})).$$
\end{thm}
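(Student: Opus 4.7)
The plan is to use Viterbo's generating function formalism together with a reduction inequality along the coisotropic submanifold $W:=\C^k\times i\R^{n-k}$. The sub-quadratic hypothesis on $H$ enters as the standing assumption under which the time-one map $\psi$ admits a generating function quadratic at infinity (\gfqi), via the Chaperon--Laudenbach--Sikorav construction as extended by Viterbo to the sub-quadratic setting. Since both capacities in the statement are defined by sup/inf over compactly supported Hamiltonian diffeomorphisms in open neighbourhoods, by standard approximation it suffices to prove the analogous open version.

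Concretely, for every open $U\Supset X$, every compactly supported $K\colon\C^k\to\R$ with $\supp(K)\subset U$ and time-one map $\phi_K^1$, and every open $V\supset \Pi_k\bigl(\psi(\overline U\times\R^{n-k})\cap W\bigr)$, I would establish $c(\phi_K^1)\le\gamma(V)$. Taking the sup over $K$ and inf over $U,V$ then yields the theorem. The key intermediary is to lift $K$ to $\tilde K(z,w):=K(z)$ on $\C^n$; the flow $\phi_{\tilde K}^1$ commutes with the characteristic $\R^{n-k}$-action on $W$ (translations in the $p_{k+1},\ldots,p_n$ directions), preserves $W$ as well as its characteristic foliation, and descends under the reduction map $\Pi_k\colon W\to\C^k$ precisely to $\phi_K^1$.

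The core of the argument is a reduction inequality for Viterbo's min-max spectral values. I would compare the \gfqi of $\psi$ with the \gfqi of $\psi\circ\phi_{\tilde K}^1$, both of which are available under the sub-quadratic hypothesis, and show that the spectral difference between these two generating functions descends to recover the invariants of $\phi_K^1$ after restriction to the slice $\{q_j=0,\ j>k\}$ and fibrewise min-max over the characteristic leaves. The critical points of the restricted generating function correspond exactly to the intersection points $\psi(\overline U\times\R^{n-k})\cap W$, whose $\Pi_k$-projections lie in $V$; hence these spectral values are controlled by the capacity of $V$, giving $c(\phi_K^1)\le\gamma(V)$ once the standard bound $c\le\gamma$ and monotonicity are combined with the reduction comparison.

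The main obstacle is this last step: justifying that the spectral values of $\phi_K^1$ on $\C^k$ can be read off from a fibrewise reduction of the \gfqi on $\C^n$, while simultaneously handling the fact that $\tilde K$ is not compactly supported in $\C^n$. The sub-quadratic hypothesis is essential on both counts: it guarantees that the relevant generating functions (for $\psi$ and for $\psi\circ\phi_{\tilde K}^1$) are genuinely quadratic at infinity after the composition, and it allows one to exchange the fibrewise min-max with the global min-max without losing control of critical values at infinity. Once this technical reduction formula is in place, the inequality assembles from purely formal properties of Viterbo's invariants.
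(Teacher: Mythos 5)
Your plan diverges from the paper's proof at a crucial and, in my view, irreparable juncture: the assertion that a sub-quadratic $H$ makes $\psi=\psi^H_1$ admit a generating function quadratic at infinity ``as extended by Viterbo to the sub-quadratic setting.'' Viterbo's gfqi construction compactifies $\Gamma(\psi)$ to a Lagrangian sphere in $T^*S^{2n}$, which requires $\psi$ to equal the identity outside a compact set. Sub-quadraticity gives only $\psi(z)-z=o(|z|)$, which does not allow that compactification, and there is no off-the-shelf sub-quadratic extension of the gfqi and spectral-invariant machinery to appeal to. So the comparison of gfqi's of $\psi$ and $\psi\circ\phi^1_{\tilde K}$, which is the engine of your argument, cannot even be set up. The paper makes the sub-quadratic hypothesis do a completely different job: Proposition \ref{prop: less than linear} uses Gronwall to show that the camel trajectories (orbit segments joining $X\times\R^{n-k}$ to $W=\C^k\times i\R^{n-k}$) stay in a ball of a priori radius, then replaces $H$ by a cutoff $\chi(|z|)H_t(z)$ whose flow has the same camel trajectories, thereby reducing to the compactly supported case of Theorem \ref{thm:compact support}.

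A second missing ingredient is the torus compactification. Even for compactly supported $\psi$, the cylinder $X\times\R^{n-k}$ has noncompact fibers, and the naive lift $\tilde K(z,w)=K(z)$ is not compactly supported — you note this but leave it unresolved. The paper works instead in $\R^{2m}\times T^*\T^{n-m}$, views $X\times\R^{n-m}$ as $X\times\{0\}\times\T^{n-m}$, and lifts $H$ to $\chi(p)H(z)$ with a compactly supported cutoff $\chi$ in the $p$-direction; since the $\T^{n-m}$ factor is already compact, the lift is genuinely compactly supported. The precise spectral mechanism is then Propositions \ref{first inequality} and \ref{second inequality}, which rest on Proposition \ref{prop:redspec}: a concrete reduction inequality relating $c(\alpha\otimes 1,S)$, $c(\alpha,S_b)$, and $c(\alpha\otimes\mu_B,S)$ proved via commutative diagrams in (co)homology. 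That is the rigorous version of your ``fibrewise min-max'' heuristic, and it is applied to $\psi$ itself plus a separate displacing diffeomorphism $\varphi$ of the reduced set — not to $\psi\circ\phi^1_{\tilde K}$. The high-level skeleton (test $c(X)$ by compactly supported Hamiltonians, compare along the reduction by $W$) is right in spirit, but the technical core you propose does not hold together, and the paper's route — bounded camel trajectories, cutoff, torus compactification, and the reduction inequality for spectral values — is essential and absent from your sketch.
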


Using the monotonicity of the capacity $\gamma$ we get as an immediate consequence that $$c(X)\leq \gamma(\Pi_k\psi(X\times\R^{n-k})).$$ For example if we take $X=B^{2k}_r$ we get the inequality $$\gamma(\Pi_k\psi(B_r^{2k}\times\R^{n-k}))\geq \pi r^2.$$ 

We want to point out that the subset on the right hand side of the inequality in Theorem \ref{thm:coisotropic nonsqueezing} is the symplectic reduction of $\psi(X\times\R^{n-k})$ by the transverse coisotropic subspace $\C^k\times i\R^{n-k}$. More precisely, recall that by definition a coistropic subspace $W\subseteq \C^n$ verifies $W^{\omega}\subseteq W$ where $W^{\omega}$ stands for symplectic orthogonal. One can then consider the space $W/W^{\omega}$ which is symplectic by construction. Denote by $\pi_W: W\rightarrow W/W^{\omega}$ the quotient map. The symplectic reduction of a subset $Z\subseteq \C^n$ by $W$ is defined as $\Red_W(Z)=\pi_W(Z\cap W)$. If we set $W=\C^k\times i\R^{n-k}$ then the inequality in Theorem \ref{thm:coisotropic nonsqueezing} can be rewritten as:
$$c(X)\leq\gamma(\Red_W(\psi(X\times \R^{n-k}))).$$
Note that this reduction is the projection of a bounded set so in particular Theorem \ref{thm:coisotropic nonsqueezing} is not trivial for compactly supported Hamiltonians (see Figure \ref{fig:lagrangian}).

\begin{figure}[h]
	
	\centering
	\includegraphics[width=0.9\textwidth]{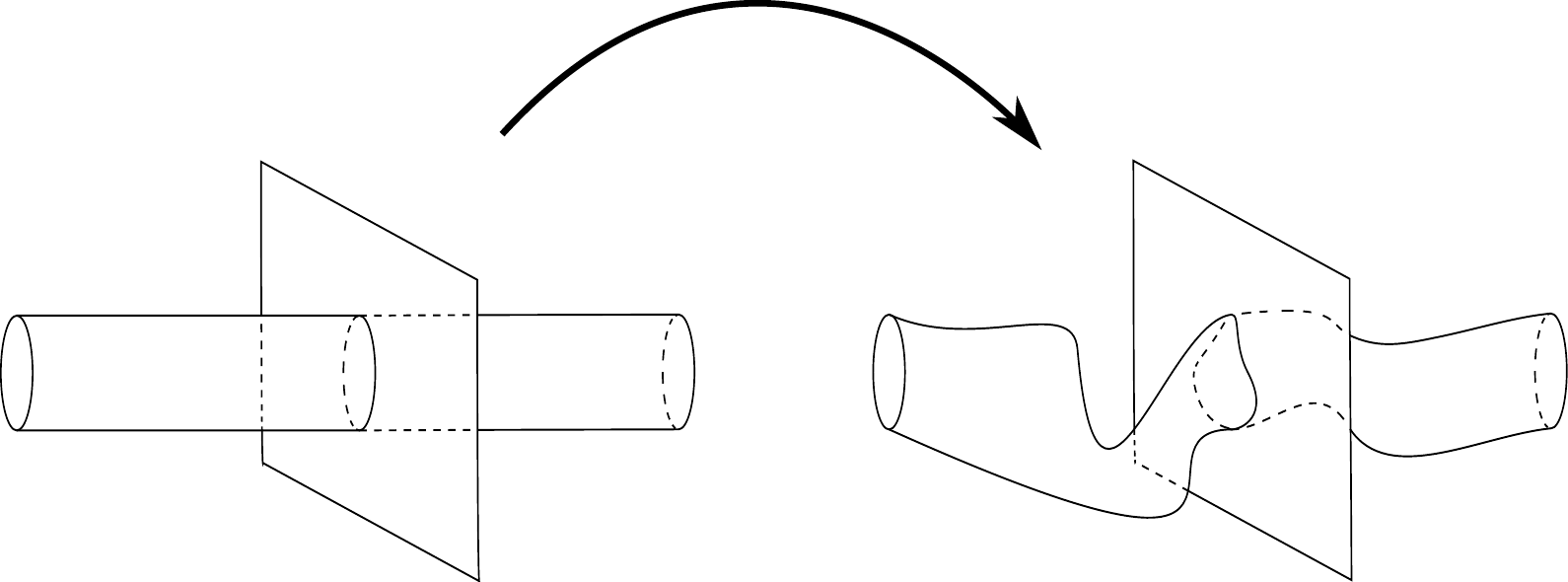}
	\label{fig:lagrangian}
	\caption{This figure represents the image of the coisotropic cylinder $X\times\mathbb{R}^{n-k}$ by a compactly supported Hamiltonian diffeomorphism $\psi$. The transverse plane represents the complementary coisotropic subspace $W=\C^k\times i\R^{n-k}$. Theorem \ref{thm:coisotropic nonsqueezing} gives information about the capacity of the projection of the intersection with $W$.}
\end{figure}

\begin{rem}
	Theorem \ref{thm:coisotropic nonsqueezing} is related to the classical camel theorem which states that there is no symplectic isotopy $\psi_t$ with support in $(\C^{n}\setminus(\C^{n-1}\times \R))\cup B^{2n}_\epsilon$, such that $\psi_0=\id$ and $\psi_1$ sends a ball of radius $r>\epsilon$ contained in one component of $\C^{n}\setminus(\C^{n-1}\times \R)$ to the other. In \cite{viterbo1992symplectic} Viterbo proved that if $\psi_t$ sends a ball from one connected component of $\C^{n}\setminus(\C^{n-1}\times \R)$ to the other then for $V=\bigcup_{t\in[0,1]}\psi_t(B^{2n}_r)$ and $W=\C^{n-1}\times \R$ we have $$\gamma(\Red_W(V))\geq \pi r^2.$$
\end{rem}

As an intuition of what these capacities are, if $K$ is a convex smooth body then $c(K)$ coincides with the minimal area of a closed caracteristic on $\partial K$. On the other hand $\gamma$ is defined using Viterbo's distance on $\Ham^c(\C^n)$; the energy of a diffeomorphism is then defined to be the distance to the identity and $\gamma(U)$ measures the minimal energy that one needs to displace $U$ from itself. Both capacities are always related by the inequality $c(X)\leq \gamma(X)$ which is recovered by Theorem \ref{thm:coisotropic nonsqueezing} when $\psi=\id$.

For general sets the construction of Viterbo's capacities (cf. \cite{viterbo1992symplectic}) starts by defining for the time-$1$ map $\psi=\psi^H_1$ of the flow of a compactly supported Hamiltonian $H_t$ two values: $c(1,\psi)$ and $c(\mu,\psi)$. These values correspond to the action value of certain $1$-periodic orbits of the flow obtained by variational methods. The bi-invariant metric on $\Ham^c(\C^n)$ is then defined as $d(\psi,\id)=\gamma(\psi)=c(\mu,\psi)-c(1,\psi)$. All these quantities are invariant by symplectic conjugation so they can be used to define symplectic invariants on open bounded sets as:
$$ c(U)=\sup\{c(\mu,\psi),\,\psi\in\text{Ham}^c(U)\}$$ 
$$\gamma(U)=\inf\{\gamma(\psi),\; \psi\in \ham^c(\C^{n}),\; \psi(U)\cap U=\emptyset\}$$
If $V$ is an open (not necessarily bounded) subset of $\C^n$ then $c(V)$ (resp. $\gamma(V)$) is defined as the supremum of the values of $c(U)$ (resp. $\gamma(U)$) for all open bounded $U$ contained in $V$. If $X$ is an arbitrary subset of $\C^n$ then its capacity $c(X)$ (resp. $\gamma(X)$) is defined as the infimum of all the values $c(V)$ (resp. $\gamma(V)$) for all open $V$ containing $X$. 

If $k>0$ one can prove that $c(\C^k\times \R^{n-k})=0=\gamma(\C^k\times \R^{n-k})$ for the coisotropic subspace $\C^{2k}\times \R^{n-k}\subseteq \C^k\times \C^{n-k}$ (see Appendix \ref{appendix: calculations}). By monotonicity the same is true for coisotropic cylinders $X\times \R^{n-k}\subseteq \C^k\times \C^{n-k}$ so the existence of Viterbo's capacities all alone does not provide rigidity information for the image of these sets by general symplectic diffeomorphisms.

The proof of Theorem \ref{thm:coisotropic nonsqueezing} is achieved by a series of inequalities between Viterbo's capacities of sets and the symplectic reduction of these sets. The advantage of using Viterbo's capacities is that they are constructed using generating functions and symplectic reduction can be seen as an explicit operation on generating functions which can be then studied in detail. We first prove the theorem for compactly supported Hamiltonian diffeomorphisms  and then reduce the general case to the compactly supported one. 

There is an unpublished proof of this theorem by Buhovski and Opshtein for the case $X=(S^1(r))^k$ a product of circles of radius $r$ and $\Red_W(\psi(X\times \R^{n-k}))\subseteq Z(R)$ a symplectic cylinder of radius $R$ which relies on the theory of pseudoholomorphic curves.

\begin{rem}\label{rem: squeezing} Theorem \ref{thm:coisotropic nonsqueezing} is \textit{not} true for general symplectomorphisms and its limits are well understood. An example of a symplectomorphism $\varphi$ which is not generated by a sub-quadratic Hamiltonian and which does not verify the weaker middle dimensional inequality $$c(X)\leq \gamma(\Pi_k\varphi(X\times\R^{n-k})),$$ is the symplectomorphism  $\varphi(z_1,\dots,z_n)=(z_{k+1},\dots, z_n,z_1,\dots,z_k).$ Indeed for this map $\varphi$ we have $$\varphi( X\times\mathbb{R}^{n-k})=\mathbb{R}^{n-k}\times X$$ and either the $k$-projection is contained in $Z=\R^k$ if $k\leq n/2$ or in $Z=\R^{n-k}\times \C^{4k-2n}$ otherwise. In both cases $\gamma(Z)=0$ by Appendix \ref{appendix: calculations}, so if for example $X$ is a closed ball, then the statement is not verified. In this example $\varphi$ is generated by a quadratic Hamiltonian. Moreover, we prove in Proposition \ref{prop: less than linear} that if $\lvert\nabla H_t(z)\rvert\leq A+ B\lvert z\rvert$ then its flow $\psi^H_t$ verifies the statement of Theorem \ref{thm:coisotropic nonsqueezing} at least for small times.\end{rem}

One may use the rigidity result of Theorem \ref{thm:coisotropic nonsqueezing} to define a non-trivial invariant. Consider the following quantity:
$$\gamma^k_G(U)=\inf\{\gamma(\Pi_k\phi(U)) \,|\,\phi\in G\}$$ where $G$ is a subgroup of the group of symplectic diffeomorphisms. For $G=Sympl(\C^n)$ we know by Remark \ref{rem: squeezing} that $\gamma^k_G$ is zero on coisotropic cylinders of dimension $k$. On the other hand, if the elements of $G$ are Hamiltonian diffeomorphisms generated by sub-quadratic functions then Theorem \ref{thm:coisotropic nonsqueezing} implies that $\gamma_G^k$ is bounded from below on coisotropic cylinders of dimension $k$ by the capacity $c$ of the base. As an example of $G$ one can take the subgroup of Hamiltonian diffeomorphisms $\varphi_t^H$ where $H$, $\varphi_t^H$ and $(\varphi_t^H)^{-1}$ are Lipschitz on the space variable over compact time intervals (see Appendix \ref{appendix: subgroup}). For this subgroup Theorem \ref{thm:coisotropic nonsqueezing} gives $$c(X)\leq \gamma^k_G(X\times\R^{n-k})\leq \gamma(X).$$

\paragraph{Hamiltonian PDEs.} The second part of this article deals with middle dimensional symplectic rigidity in infinite dimensional Hilbert spaces. 

Let $E$ be a real Hilbert space and let $\omega$ be a non-degenerate 2-form. Here we will understand non-degenerate in the sense that the map $\xi\in E\rightarrow \omega(\xi,\cdot)\in E^*$ is an isomorphism. In contrast with the finite dimensional case, little is known about the rigidity properties of symplectomorphisms in this context. The most general attempt to prove a non-squeezing theorem has been \cite{abbondandolo2015non} where the result is proved only for convex images of the ball. The first result pointing in the direction of the infinite dimensional equivalent of Gromov's theorem dates back to \cite{kuksin1995infinite}. Kuksin gave a proof of the theorem for a particular type of symplectomorphism that appear in the context of Hamiltonian PDEs. He did this by approximating the flows by finite dimensional maps and then applying Gromov's theorem. Since then there has been a great number of articles proving the same result for different Hamiltonian PDEs via finite dimensional approximation. We refer the reader to \cite{killip2016finite} for an excellent summary of the prior work.

The goal of the second part of this article is to extend Theorem \ref{thm:coisotropic nonsqueezing} to the infinite dimensional case. We restrict ourselves to semilinear PDEs of the type described in \cite{kuksin1995infinite}. Let $\langle\cdot,\cdot\rangle$ be the scalar product of $E$, $\{\varphi_j^{\pm}\}$ be a Hilbert basis, $J:E\rightarrow E$ be the complex structure defined by $J\varphi_j^\pm=\mp\varphi_j^\mp$ and $\bar J=-J$. The symplectic structure that we consider is  $\omega(\cdot,\cdot)=\langle \bar J\cdot,\cdot\rangle$ and the Hamiltonian functions are of the form $$H_t(u)=\frac{1}{2}\langle Au,u\rangle+h_t(u),$$ where $A$ is a (possibly unbounded) linear operator and $h_t$ is a smooth function. The Hamiltonian vector field is $$X_H(u)=JAu+J\nabla h_t(u)$$ Remark that the domain of definition of the vector field is the same as the domain of $A$ which is usually only defined on a dense subspace of $E$. If $e^{tJA}$ is bounded, then solutions can be defined via Duhamel's formula and if $\nabla h$ is $\mathcal{C}^1$ and locally Lipschitz, then the local flow is a well defined symplectomorphism \cite{kuksin2000analysis}. Under compactness assumptions on the nonlinearity, flow-maps can be approximated on bounded sets by finite dimensional symplectomorphisms. Specific examples of this type of equations are (see \cite{kuksin1995infinite} for more details): Nonlinear string equation in $\T$, $$\ddot u=u_{xx}+p(t,x,u)$$ where $p$ is a smooth function which has at most polynomial growth at infinity. Quadratic nonlinear wave equation in $\T^2$, $$\ddot u=\Delta u+a(t,x)u+b(t,x)u^2,$$
Nonlinear membrane equation on $\T^2$, $$\ddot u=-\Delta^2u+p(t,x,u),$$
Schr\"odinger equation with a convolution nonlinearity in $\T^n$,  $$-i\dot u=-\Delta u+V(x)u+\Big[\frac{\partial}{\partial \bar U}G(U,\bar U,t,x)\Big]*\xi,\quad U=u*\xi,$$ where $\xi$ if a fixed real function and $G$ is a real-valued smooth function.

For concreteness we will study the nonlinear string equation with bounded $\nabla h_t$ but the main result will still be true for the previous equations provided that the nonlinear part $\nabla h_t$ is sub-quadratic. 

Consider the periodic nonlinear string equation $$\ddot u=u_{xx}-f(t,x,u),\qquad u=u(t,x),$$ where $x\in \T=\R/ 2\pi \Z$ and $f$ is a smooth function which is bounded and has at most a polynomial growth in $u$, as well as its $u-$ and $t-$derivatives:$$\Big\lvert\frac{\partial ^a}{\partial u^a}\frac{\partial ^b}{\partial t^b}f(t,x,u)\Big\rvert\leq C_k(1+\lvert u\rvert)^{M_k},\quad for\quad\text{for}\quad a+b=k\quad \text{and all}\quad k\geq 0, $$ with $M_0=0$. Here $C_k$ and $M_k$'s are  non-negative constants. The hypothesis on $M_0$ is the one that will allow us latter to apply Theorem \ref{thm:coisotropic nonsqueezing} to the finite dimensional approximations. This hypothesis is verified by $f(t,x,u)=\sin u$ which gives the Sine-Gordon equation. Let us describe the Hamiltonian structure of this equation. We denote by $B$ the operator $B=(-\partial^2/\partial x^2+1)^{1/2}$ and remark that we may write the equation in the form \begin{align*}\dot u &= -Bv, \\\dot v &= (B-B^{-1})u+B^{-1}f(t,x,u).\end{align*} Define $E=E_+\times E_-=H^{\frac{1}{2}}(\T)\times H^{\frac{1}{2}}(\T)$ the product of Hilbert spaces where the scalar product of $H^{\frac{1}{2}}(\T)$ is given by $$\langle u_1,u_2\rangle =\frac{1}{2\pi}\int_0^{2\pi}Bu_1(x)u_2(x)dx.$$ Here $J(u,v)=(-v,u)$, the operator is $A=(B-B^{-1})\times B$ and $\nabla h_t(u,v)=(B^{-1}f(t,x,u),0)$ which has bounded norm over compact time intervals since $M_0=0$ by hypothesis. 

Let $\{\varphi^+_j\,|\,j\in \Z\}$ be the Hilbert basis of $E_+$ on which $B$ is diagonal given by the Fourier basis and denote by $\{\varphi^-_j:=-J\varphi_j^ +\,|\,j\in\Z \}$ the associated Hilbert basis of $E_-$. Moreover denote $E_k$ (resp. $E^k_+$ and $E^ k_-$) the Hilbert subspace generated by $\{\varphi^\pm_j\,|\, |j|\leq k\}$ (resp. $\{\varphi^+_j\,|\,|j|\geq k+ 1\}$ and $\{\varphi^-_j\,|\,|j|\geq k+ 1\}$) and $\Pi_k:E\rightarrow E_k$ (resp. $\Pi^k_+$ and $\Pi^ k_-$) the corresponding projection. We use the finite dimensional approximation together with Theorem \ref{thm:coisotropic nonsqueezing} to prove the following result:  

\begin{thm}\label{thm: infinite nonsqueezing}
	Denote by $\Phi^t:E\rightarrow E$ the flow of the nonlinear string equation satisfying the previous hypothesis. For every $k\in \N$, every compact subset $X$ of $E_k$ and every $t\in \R$ we have $$c(X)\leq \gamma(\Pi_k\Phi^t(X\times E^k_+)).$$
\end{thm}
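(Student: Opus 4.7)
The plan is to reduce Theorem~\ref{thm: infinite nonsqueezing} to a sequence of finite-dimensional applications of Theorem~\ref{thm:coisotropic nonsqueezing} via Kuksin's Galerkin approximation.

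First I move to the interaction picture by writing $\Phi^t = e^{tJA}\circ\Psi^t$, where $\Psi^t$ is the flow of the time-dependent Hamiltonian $\tilde h_t(u) := h_t(e^{tJA}u)$. Both $e^{tJA}$ and its adjoint are bounded symplectic linear operators, and under the standing hypothesis $M_0=0$ the function $f$ is uniformly bounded, so $\nabla h_t$ is uniformly bounded on $E$; consequently so is $\nabla\tilde h_t = (e^{tJA})^*\nabla h_t\circ e^{tJA}$, and in particular $\tilde h_t$ is sub-quadratic. Because $B$ is diagonal in the Fourier basis, $e^{tJA}$ preserves each mode and hence the symplectic splitting $E = E_k\oplus E_k^\omega$; this gives $\Pi_k\circ e^{tJA} = (e^{tJA}|_{E_k})\circ\Pi_k$ with $e^{tJA}|_{E_k}$ a linear symplectomorphism of $E_k$ that preserves $\gamma$. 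It therefore suffices to prove
\begin{equation*}
c(X) \leq \gamma\bigl(\Pi_k\Psi^t(X\times E_+^k)\bigr).
\end{equation*}

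For each $N\geq k$, let $\Psi^t_N$ be the Galerkin approximation of $\Psi^t$ on $E_N$, namely the Hamiltonian flow on $E_N\cong\mathbb{C}^{2N+1}$ generated by the restriction of $\tilde h_t\circ\Pi_N$ to $E_N$. Its gradient is still uniformly bounded, so $\Psi^t_N$ is a finite-dimensional Hamiltonian diffeomorphism generated by a sub-quadratic Hamiltonian. Writing $E_N = E_k\oplus F_+^{N,k}\oplus F_-^{N,k}$ with $F_\pm^{N,k}:=E_\pm^k\cap E_N$, the subspace $F_+^{N,k}$ plays the role of the real isotropic factor $\mathbb{R}^{n-k}$ in Theorem~\ref{thm:coisotropic nonsqueezing}. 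Applying that theorem to $\Psi^t_N$ with coisotropic cylinder $X\times F_+^{N,k}$ and invoking monotonicity of $\gamma$ yields
\begin{equation*}
c(X) \leq \gamma\bigl(\Pi_k\Psi^t_N(X\times F_+^{N,k})\bigr).
\end{equation*}
It remains to pass to the limit $N\to\infty$. Set $V := \Pi_k\Psi^t(X\times E_+^k)$ and $V_N := \Pi_k\Psi^t_N(X\times F_+^{N,k})$; since $\nabla\tilde h$ is uniformly bounded, the flows $\Psi^s$ and $\Psi^s_N$ displace every point by a uniformly bounded amount, so $V$ and all the $V_N$ sit in a common bounded subset of the finite-dimensional space $E_k$. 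The conclusion would follow once one proves the inclusion $V_N\subseteq W$ for $N$ large, valid for every open neighborhood $W$ of $V$ in $E_k$: indeed the previous inequality then gives $c(X)\leq\gamma(V_N)\leq\gamma(W)$, and taking the infimum over $W\supseteq V$ yields $c(X)\leq\gamma(V)$.

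The hardest step is establishing this inclusion, which reduces to the uniform convergence
\begin{equation*}
\sup_{z\in X\times F_+^{N,k}}\bigl\|\Pi_k\Psi^t_N(z) - \Pi_k\Psi^t(z)\bigr\| \xrightarrow[N\to\infty]{} 0
\end{equation*}
over a set that is \emph{unbounded} in $E$. The key ingredient is the compactness of the nonlinearity: the smoothing property of $B^{-1}$ together with the bound on $f$ forces $\nabla h_t(E)$, hence also $\nabla\tilde h_t(E)$, into a compact subset of $E$, so that $(I-\Pi_N)\nabla\tilde h_t\to 0$ uniformly on $E$. Using $\Pi_k\Pi_N = \Pi_k$ for $N\geq k$ together with the commutations $[J,\Pi_k]=[J,\Pi_N]=0$, the Duhamel representations of $\Psi^t$ and $\Psi^t_N$ combine into
\begin{equation*}
\Pi_k\bigl[\Psi^t(z) - \Psi^t_N(z)\bigr] = \int_0^t \Pi_k J\bigl[\nabla\tilde h_s(\Psi^s(z)) - \nabla\tilde h_s(\Psi^s_N(z))\bigr]\,ds,
\end{equation*}
from which the required estimate will follow by a Gronwall-type iteration exploiting the compactness above, in the spirit of Kuksin's approximation scheme from \cite{kuksin1995infinite}.
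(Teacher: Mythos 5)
Your reduction to the interaction picture and the appeal to Theorem~\ref{thm:coisotropic nonsqueezing} on the Galerkin projections are both correct and match the paper's first steps. However, the passage to the limit has a genuine gap that the paper circumvents with a different and essential maneuver.

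You assert the uniform convergence
$$\sup_{z\in X\times F_+^{N,k}}\bigl\|\Pi_k\Psi^t_N(z) - \Pi_k\Psi^t(z)\bigr\| \longrightarrow 0,$$
and you correctly identify that the domain $X\times F_+^{N,k}$ is unbounded. But the ingredients you invoke are not strong enough to prove it. Kuksin's approximation (the paper's Proposition~\ref{prop: approximation}) only gives convergence uniform on \emph{bounded} sets, and the reason is intrinsic: in the Gronwall step you need a Lipschitz estimate $\|\nabla\tilde h_s(a)-\nabla\tilde h_s(b)\|\leq L\|a-b\|$ along the two trajectories $\Psi^s(z)$ and $\Psi^s_N(z)$, and under the standing hypothesis ($M_0=0$ but $M_k$ arbitrary for $k\geq1$, so $\partial_u f$ may have polynomial growth) $\nabla h$ is only \emph{locally} Lipschitz. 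When $\|z\|\to\infty$ the relevant Lipschitz constant blows up, so the Gronwall exponential $e^{Lt}$ is not uniformly controlled and the error term $(I-\Pi_N)\nabla\tilde h$ being uniformly small does not save the estimate. (For Sine--Gordon specifically, $\cos$ is bounded and $\nabla h$ happens to be globally Lipschitz, so your argument works there; it does not work for the general class of nonlinearities the theorem covers.)

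The paper avoids this by proving a \emph{sharper} statement first, Proposition~\ref{prop: infinite coisotropic}, which bounds $\gamma$ of the projection of $V^t(C)\cap\{\Pi_+^k=0\}$, i.e.\ it keeps track of the transverse coisotropic intersection from Theorem~\ref{thm:coisotropic nonsqueezing} instead of discarding it by monotonicity at the finite-dimensional stage. That intersection condition $\Pi_+^k V_n^t z=0$, combined with the uniform bound on $\|V_n^t z - z\|$, forces the relevant ``camel points'' $z$ into a fixed bounded ball independent of $n$ (Lemma~\ref{lemma: finite-infinite}, part 2). The finite-dimensional approximation is then applied only on this bounded set, where it is valid, via a compactness/contradiction argument. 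You should therefore apply Theorem~\ref{thm:coisotropic nonsqueezing} to $\Psi^t_N$ in its sharper form (with the intersection), establish the uniform boundedness of the resulting camel points, and only then pass to the limit; discarding the intersection before taking $N\to\infty$, as you do, is what creates the unbounded-domain problem.
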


As an example of what type of information we get from this theorem, consider the subspace $E_0$ which consists on constant functions. In this case Theorem \ref{thm: infinite nonsqueezing} gives us information on the global behavior of solutions with constant initial velocity provided that the projection on $E_0$ of the initial conditions is contained in a compact set $X$, for example in the closed ball of radius $r$. On the other hand, if we interchange the roles of $E^k_+$ and $E^k_-$ we get information about solutions whose initial position is given by a constant function. In particular we see that the energy of these solutions cannot be globally transfered to higher frequencies since the projection on $E_0$ cannot be contained in a ball $B_R^2$ with $R<r$.  

Theorem \ref{thm: infinite nonsqueezing} may also be seen as an existence result. We consider again the case $k=0$. By reordering the Hilbert basis we may project onto the symplectic plane of frequency $l$ of our choice. Suppose that $X$ is a ball of radius  $R$ in $\text{Vect}\{\varphi_l^+,\varphi_l^-\}\simeq \R^2$, we will ask if the projection is contained in a ball of radius $r$ in  $\text{Vect}\{\varphi_l^+,\varphi_l^-\}$. Let $U(t)=(u(t),v(t))\in E=H^{\frac{1}{2}}\times H^{\frac{1}{2}}$ be a solution of the nonlinear string equation, that is, such that $$\dot u=-B v\quad\text{and}\quad \ddot u=u_{xx}-f(t,x,u).$$ Use the symplectic Hilbert basis $\{\varphi_j^\pm\,|\,j\in \Z\}$ to write $$U(t)=\sum_ju_j(t)\varphi_j^++v_j(t)\varphi_j^-=\sum_j(u_j(t)-v_j(t)J)\varphi_j^+.$$ We will denote by $U_j(t)$ the complex number $u_j(t)-iv_j(t)$. Moreover denote by $E^0$ the Hilbert subspace of $E$ generated by $\{\varphi^\pm_j\,|\,|j|> 0\}$. If $0<r<R$ Theorem \ref{thm: infinite nonsqueezing} gives $\Phi^t( B_R\times E^0_+)\not\subseteq B_r\times E^0$ so we get:

\begin{cor}
	For any $l\geq 1$, any $R>r>0$ and any $t_0\in \R$ there exists a (mild) solution $U(t)=(u(t),v(t))$ of the nonlinear string equation in $H^{\frac{1}{2}}\times H^{\frac{1}{2}}$ such that $$v_j(0)=0\quad \text{for}\quad j\neq l\quad\text{and}\quad\lvert U_l(0)\rvert\leq R \quad \text{but}\quad  \lvert U_l(t_0)\rvert> r$$
\end{cor}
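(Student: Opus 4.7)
The plan is to deduce the corollary by contradiction from Theorem~\ref{thm: infinite nonsqueezing} applied with $k=0$, after relabelling the Fourier basis so that the chosen frequency $l$ plays the role of index~$0$. I would take $X=\{z\in\text{Vect}\{\varphi_l^+,\varphi_l^-\}:|z|\leq R\}$, a closed $2$-disc of radius $R$ in a symplectic plane, and consider the coisotropic cylinder $X\times E^0_+$. Its points are exactly the initial data $U(0)$ satisfying $|U_l(0)|\leq R$ together with $v_j(0)=0$ for every $j\neq l$; these are the candidates for the solutions that the corollary produces.

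Suppose, for contradiction, that no such solution exists. Then $\Pi_0\Phi^{t_0}(X\times E^0_+)$ is contained in the closed disc $\{|z|\leq r\}$ of the $l$-th symplectic plane. Since both $c$ and $\gamma$ reduce to the area on compact $2$-discs, we have $c(X)=\pi R^2$ and $\gamma(\{|z|\leq r\})=\pi r^2$, so monotonicity of $\gamma$ would give
\[
\gamma\bigl(\Pi_0\Phi^{t_0}(X\times E^0_+)\bigr)\leq \pi r^2<\pi R^2=c(X),
\]
in direct contradiction with Theorem~\ref{thm: infinite nonsqueezing}. Hence some $U(0)\in X\times E^0_+$ must satisfy $|U_l(t_0)|>r$, and by construction this $U$ has the required initial velocity profile.

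The only point that needs care is justifying the application of Theorem~\ref{thm: infinite nonsqueezing} with the role of ``index $0$'' played by an arbitrary symplectic frequency pair $\{\varphi_l^+,\varphi_l^-\}$, rather than the lowest one. This is not a genuine obstacle: the hypotheses on the nonlinearity $f$ are symmetric under permutation of Fourier modes, and the Kuksin-type finite-dimensional approximation underlying Theorem~\ref{thm: infinite nonsqueezing} can be carried out along any exhausting sequence of finite-dimensional symplectic subspaces containing $\text{Vect}\{\varphi_l^+,\varphi_l^-\}$. On each such approximation Theorem~\ref{thm:coisotropic nonsqueezing} is applied to the coisotropic cylinder $X\times\R^{n-1}$, where the $\R^{n-1}$ factor is the span of the retained $\varphi_j^+$ with $j\neq l$; passing to the limit then yields the inequality used above.
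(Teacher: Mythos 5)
Your argument is essentially the paper's own proof: you apply Theorem~\ref{thm: infinite nonsqueezing} with $k=0$ after reordering the basis so that the $l$-th symplectic plane plays the role of $E_0$, take $X$ to be the closed $2$-disc of radius $R$, and use $c(X)=\pi R^2 > \pi r^2 = \gamma(B_r)$ together with monotonicity to conclude $\Phi^{t_0}(X\times E^0_+)\not\subseteq B_r\times E^0$. The paper states the relabelling and the capacity comparison more tersely, but the reasoning is the same, and your extra remarks on why the Kuksin approximation is insensitive to the chosen ordering of frequency pairs are a harmless elaboration of what the paper leaves implicit.
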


\section{The coisotropic camel: Viterbo's approach}\label{sec:lagcamelvit}
We provide here a proof of the Theorem \ref{thm:coisotropic nonsqueezing} which depends on Viterbo's spectral invariants, hence on generating functions instead of holomorphic curves. We start by proving the result for compactly supported Hamiltonian diffeomorphisms. 

\subsection{Generating functions and spectral invariants}
\paragraph{The classical setting.}
To a compactly supported Hamiltonian diffeomorphism $\psi$ of $\mathbb{R}^{2n}$ one associates a Lagrangian submanifold $L_\psi \subset T^*S^{2n}$ in the following way. Denote by $\overline{\mathbb{R}^{2n}}\times\mathbb{R}^{2n}$ the vector space $\mathbb{R}^{2n}\times\mathbb{R}^{2n}$ endowed with the symplectic form $(-\omega)\oplus\omega$. The graph $\Gamma(\psi):=\{(x,\psi(x));\,x\in \mathbb{R}^{2n}\}\subset \overline{\mathbb{R}^{2n}}\times\mathbb{R}^{2n}$ is a Lagrangian submanifold Hamiltonian isotopic to the diagonal $\Delta:=\{(x,x);\,x\in\mathbb{R}^{2n}\}\subset \overline{\mathbb{R}^{2n}}\times\mathbb{R}^{2n}$. Identifying $\overline{\mathbb{R}^{2n}}\times\mathbb{R}^{2n}$ and $T^*\mathbb{R}^{2n}$ via the symplectic isomorphism $$\mathcal{I}:(\bar{q},\bar{p},q,p)\mapsto(\frac{\bar{q}+q}{2},\frac{\bar{p}+p}{2},p-\bar{p},\bar{q}-q),$$ and noting that $\Gamma(\psi)$ and $\Delta$ coincide at infinity, we can produce a compact version of the Lagrangian submanifold $\Gamma(\psi)\subset\overline{\mathbb{R}^{2n}}\times\mathbb{R}^{2n}$, which is a Lagrangian sphere $L_\psi\subset T^*S^{2n}$. This Lagrangian submanifold $L_\psi$ is Hamiltonian isotopic to the 0-section and coincides with it on a neighbourhood of the north pole, so it has a generating function quadratic at infinity (called \gfqi in the following) by \cite{laudenbach1985persistance,sikorav1986immersions,sikorav1987intersections}. This is a function $S: S^{2n}\times \mathbb{R}^N\rightarrow \mathbb{R}$ which coincides with a non-degenerate quadratic form $Q:\mathbb{R}^N\rightarrow \mathbb{R}$ at infinity: $$\exists C>0\quad\text{such that}\quad S(x,\xi)=Q(\xi)\quad \forall x\in S^{2n}, |\xi|>C,$$ and such that $$L_\psi=\Big\{(x,\frac{\partial S}{\partial x}),\, (x,\xi)\in S^{2n}\times \mathbb{R}^{N},\,\frac{\partial S}{\partial \xi}(x,\xi)=0\Big\} \subset T^*S^{2n}.$$ 
with $0$ being a regular value of $(x,\xi)\mapsto \frac{\partial S}{\partial \xi}(x,\xi)$. A direct consequence of the definition is that $\Sigma_S:=\{(x,\xi)\,|\,\frac{\partial S}{\partial \xi}(x,\xi)=0\}$ is a submanifold and that the map $i_S:\Sigma_S\rightarrow T^*S^{2n}$ given by $(x,\xi)\mapsto(x,\frac{\partial S}{\partial x}(x,\xi))$ is an immersion. When $S$ is a \gfqi that generates an embedded submanifold we moreover ask that $i_S$ is a diffeomorphism between $\Sigma_S$ and $L_\psi$, so every $S$ has a unique critical point associated to $(N,0)$ given by $i_S^{-1}(N,0)$. Denote by $E^\lambda :=\{S\leq \lambda\}$, $i_\lambda:(E^\lambda,E^{-\infty}) \hookrightarrow (E^{+\infty},E^{-\infty})$, $H^*(E^{+\infty},E^{-\infty})\overset{T^{-1}}{\simeq}H^*(S^{2n})$ ($T$ is the Thom isomorphism). One can select spectral values $c(\alpha,S)$ for $\alpha \in H^*(S^{2n})$ by: 
$$
c(\alpha,S):=\inf\{\lambda\,|\,i_\lambda^* (T\alpha)\neq 0\}.
$$
The \gfqi associated to $\psi$ is unique up to certain explicit operations \cite{theret1999complete,viterbo1992symplectic} so there is a natural normalization (requiring that $S(i_S^{-1}(N,0))=0$) that ensures that the value $c(\alpha,S)$ does not depend on the \gfqi  , so we denote it henceforth $c(\alpha,\psi)$. It is a symplectic invariant in the sense that if $\Phi\in \text{Symp}(\mathbb{R}^{2n})$ then $c(\alpha,\Phi\circ \psi\circ\Phi^{-1})=c(\alpha,\psi)$. Taking for $\alpha$ generators $1$ and $\mu$ of $H^0(S^{2n})$ and $H^{2n}(S^{2n})$ respectively, we therefore get two spectral invariants $c(1,\psi)$ and $c(\mu,\psi)$ of Hamiltonian diffeomorphisms, and a spectral norm $\gamma(\psi):=c(\mu,\psi)-c(1,\psi)$. These invariants can be used in turn to define symplectic invariants of subsets of $\mathbb{R}^{2n}$. First if $U$ is an open and bounded set: 
\begin{gather}
c(U)=\sup\{c(\mu,\psi),\,\psi\in\text{Ham}^c(U)\}, \label{eq:cX}\\ \quad \gamma(U)=\inf\{\gamma(\psi),\; \psi\in \ham^c(\R^{2n}),\; \psi(U)\cap U=\emptyset\} \label{eq:gX} \end{gather}
 If $V$ is an open (not necessarily bounded) subset of $\R^{2n}$ we define $c(V)$ (resp. $\gamma(V)$) as the supremum of the values of $c(U)$ (resp. $\gamma(U)$) for all open bounded $U$ contained in $V$. If $X$ is an arbitrary domain of $\R^{2n}$ then we define its capacity $c(X)$ (resp. $\gamma(X)$) to be the infimum of all the values $c(V)$ (resp. $\gamma(V)$) for all open $V$ containing $X$.

\paragraph{Symplectic reduction \cite[\S 5]{viterbo1992symplectic}.}
Let us first state a general result for the spectral invariants of the reduction of some Lagrangian submanifolds. The first claim is proposition 5.1 in \cite{viterbo1992symplectic}. We include a proof for the sake of completeness. 
\begin{prop}\label{prop:redspec}
	Let $N$ and $B$ be two connected compact oriented manifolds, $S$ a \gfqi for a Lagrangian submanifold in $T^*(N\times B)$, $b$ a point in $B$ and $S_b:=S(\cdot,b,\cdot)$. Let $\alpha\in H^*(N)$ and $\mu_B\in H^*(B)$ the orientation class of $B$. Then,
	$$
	c(\alpha\otimes 1,S)\leq c(\alpha,S_b)\leq c(\alpha\otimes\mu_B,S).
	$$ 
	%where $S_b:=S(\cdot,b,\cdot)$. 
	Moreover, if $\tilde K(x,b,\xi)=K(x,\xi)$ for all $(x,b,\xi)\in N\times B\times \mathbb{R}^N$, $c(\alpha\otimes 1,\tilde K)= c(\alpha,K)= c(\alpha\otimes\mu_B,\tilde K)$.
\end{prop}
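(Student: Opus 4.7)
The plan is to derive both inequalities from naturality of the Thom isomorphism, using the inclusion of the slice over $b$ for the first inequality and a Poincar\'e-dual localisation of $\mu_B$ at $b$ for the second; the trivial case follows from K\"unneth.

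\textbf{First inequality.} The inclusion $j_b:N\times\{b\}\times\mathbb{R}^N\hookrightarrow N\times B\times\mathbb{R}^N$ satisfies $S\circ j_b=S_b$, so it restricts to maps of pairs $j_b:(E^\lambda_{S_b},E^{-\infty}_{S_b})\hookrightarrow (E^\lambda_S,E^{-\infty}_S)$ for every $\lambda$. Naturality of the Thom isomorphism and of the sublevel inclusions produces a commutative square whose vertical arrows are the pullbacks $j_b^*$, whose horizontal arrows are $i_\lambda^*\circ T$, and whose rows are $H^*(N\times B)\to H^{*+q}(E^\lambda_S,E^{-\infty}_S)$ and $H^*(N)\to H^{*+q}(E^\lambda_{S_b},E^{-\infty}_{S_b})$. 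Since $j_b^*(\alpha\otimes 1)=\alpha$ at the level of base cohomology, commutativity yields $i_\lambda^* T_{S_b}(\alpha)=j_b^*\,i_\lambda^* T_S(\alpha\otimes 1)$; hence any $\lambda>c(\alpha,S_b)$ forces $i_\lambda^* T_S(\alpha\otimes 1)\neq 0$, giving $c(\alpha\otimes 1,S)\leq c(\alpha,S_b)$.

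\textbf{Second inequality.} Here I would represent $\mu_B$ by a class localised near $b$: since $B$ is compact and oriented, $\mu_B\in H^d(B)$ lifts through excision to a relative class $\tau_b\in H^d(B,B\setminus\{b\})\cong H^d(U,U\setminus\{b\})$ on a disk neighbourhood $U$ of $b$. One then considers the long exact sequence of the pair $(E^\lambda_S, E^\lambda_S\cap F_{B\setminus\{b\}})$, with $F_A:=N\times A\times\mathbb{R}^N$, which is compatible with the Thom isomorphism; by excision the relevant relative cohomology is computed on $N\times U\times\mathbb{R}^N$. In this local model the directions transverse to the slice over $b$ enter only as smooth parameters for the sublevel sets of $S$, so a Thom-class identification matches $i_\lambda^* T_S(\alpha\otimes\tau_b)$ with $i_\lambda^* T_{S_b}(\alpha)$ up to the normal Thom class. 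Non-vanishing of the former therefore implies non-vanishing of the latter, yielding $c(\alpha,S_b)\leq c(\alpha\otimes\mu_B,S)$. This is the main obstacle: the sublevel sets $\{E^\lambda_S\}$ do not form a fibre bundle over $B$, so a direct Leray argument is unavailable, and one has to verify by hand that the local relative-Thom-class representative of $\mu_B$ interacts correctly with the sublevel-set filtration of $S$.

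\textbf{Product case.} When $\tilde K(x,b,\xi)=K(x,\xi)$, the sublevel sets split as $E^\lambda_{\tilde K}=E^\lambda_K\times B$ and $E^{-\infty}_{\tilde K}=E^{-\infty}_K\times B$, and K\"unneth gives $H^*(E^\lambda_{\tilde K},E^{-\infty}_{\tilde K})\cong H^*(E^\lambda_K,E^{-\infty}_K)\otimes H^*(B)$ with $T_{\tilde K}(\alpha\otimes\beta)=T_K(\alpha)\otimes\beta$. For $\beta\in\{1,\mu_B\}$, both non-zero in $H^*(B)$, the condition $i_\lambda^* T_{\tilde K}(\alpha\otimes\beta)\neq 0$ is equivalent to $i_\lambda^* T_K(\alpha)\neq 0$, yielding $c(\alpha\otimes 1,\tilde K)=c(\alpha,K)=c(\alpha\otimes\mu_B,\tilde K)$.
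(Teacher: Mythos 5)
Your first inequality and the product case are essentially the paper's argument, and both are correct. The divergence — and the problem — is in the second inequality.

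The paper does \emph{not} try to prove $c(\alpha,S_b)\leq c(\alpha\otimes\mu_B,S)$ directly in cohomology. It instead observes that the commutative-diagram argument you used for the first inequality works verbatim for the \emph{homological} spectral invariants (defined via $c(A,S)=\inf\{\lambda\,|\,TA\in\mathrm{Im}\,i_{\lambda*}\}$), giving $c(A\otimes[b],S)\leq c(A,S_b)$ for all $A\in H_*(N)$; it then converts this to the desired cohomological inequality using the duality identity $c(\alpha,S)=-c(\pd(\alpha),-S)$ together with $\pd(\alpha\otimes\mu_B)=\pd(\alpha)\otimes[b]$. That is, the second inequality is the first inequality in disguise, applied to $-S$, and no localisation is needed.

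Your localisation-by-excision route is a genuinely different idea, but the step you yourself flag as ``the main obstacle'' is indeed a gap, not a routine verification. To make the excision argument produce $i_\lambda^*T_{S_b}(\alpha)$ from the relative class $T_S(\alpha\otimes\tau_b)$, you need the pair $\bigl(E^\lambda_S\cap\Pi^{-1}(U),\;E^\lambda_S\cap\Pi^{-1}(U\setminus\{b\})\bigr)$ to retract, for a small disk $U\ni b$, onto the Thom pair of the normal bundle of the slice inside the sublevel set — i.e.\ onto $E^\lambda_{S_b}\times(\mathbb{R}^d,\mathbb{R}^d\setminus\{0\})$. But the fibres $E^\lambda_{S_{b'}}$ for $b'\in U$ are not diffeomorphic to $E^\lambda_{S_b}$ in general, and in fact they change homotopy type as $\lambda$ crosses critical values of $S_{b'}$, which vary with $b'$. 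You would need to: (i) restrict to $\lambda$ regular for $S_b$ so that a local family retraction exists for small $U$; (ii) verify that the Thom class of the normal direction pairs correctly with $\tau_b$ under excision and the Künneth-type splitting; and (iii) handle the critical values of $S_b$ by a limiting/continuity argument on spectral invariants. None of this is present in your sketch, and (i)–(iii) together amount to re-proving a good chunk of the structural theory. So as written the proof is incomplete precisely where it departs from the paper; I would recommend switching to the paper's duality trick, which reduces the second inequality to the argument you already have.

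One small additional remark: in the product case, your Künneth identification $H^*(E^\lambda_{\tilde K},E^{-\infty}_{\tilde K})\cong H^*(E^\lambda_K,E^{-\infty}_K)\otimes H^*(B)$ implicitly assumes either field coefficients or freeness so that tensoring with the nonzero class $\beta$ is injective. This is harmless in the paper's applications (where $B$ is a sphere or a torus), but worth noting.
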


\begin{proof} Let as before $E^\lambda:=\{S\leqslant \lambda\}$, and $E^\lambda_b:=\{S_b\leqslant \lambda\}$. Consider the commutative diagram 
	$$
	\xymatrix{
		H^*(N\times B)\ar[r]^{T}\ar[d] & H^*(E^\infty,E^{-\infty})\ar[r]^{i_\lambda^*} \ar[d] & H^*(E^\lambda,E^{-\infty})\ar[d]\\
		H^*(N)\ar[r]^{T} & H^*(E_b^\infty,E_b^{-\infty})\ar[r]^{i_\lambda^*} & H^*(E_b^\lambda,E_b^{-\infty})}
	$$
	where the map $H^*(N\times B)\rightarrow H^*(N)$ is induced by the injection $N\rightarrow N\times \{b\}\rightarrow N\times B$, and coincides with the composition of the projection on $H^*(N)\otimes H^0(B)$ and the obvious identification $H^*(N)\otimes H^0(B)\rightarrow H^*(N)$.  
	Since the diagram is commutative, $i_\lambda^*T(\alpha)\neq 0$ implies $i_\lambda^*T(\alpha\otimes 1)\neq 0$, so $c(\alpha\otimes 1,S)\leq c(\alpha,S_b)$. To get the second inequality, we need to introduce spectral invariants defined {\it via} homology. The Thom isomorphism is now $T:H_*(S^{2n})\overset{\sim}{\to} H_*(E^{+\infty},E^{-\infty})$, and
	$$
	c(A,S)=\inf\{\lambda\;|\; TA\in \im(i_{\lambda*})\}. 
	$$
	The homological and cohomological invariants are related by the equality $c(\alpha,S)=-c(\pd(\alpha),-S)$ \cite[Proposition 2.7]{viterbo1992symplectic}. In the homology setting, the commutative diagram becomes  
	$$
	\xymatrix{
		H_*(N\times B)\ar[r]^{T} & H_*(E^\infty,E^{-\infty}) & H_*(E^\lambda,E^{-\infty})\ar[l]_{i_{\lambda*}}\\
		H_*(N)\ar[u]\ar[r]^{T} & H_*(E_b^\infty,E_b^{-\infty}) \ar[u]& H_*(E_b^\lambda,E_b^{-\infty})\ar[u]\ar[l]_{i_{\lambda*}}
	}.
	$$
	As before, if $A\in H_*(N)$ verifies $T(A)\in \im(i_{\lambda*})$, then $T(A\otimes[b])\in \im(i_{\lambda*})$, so $c(A\otimes [b],S)\leqslant c(A,S_b)$ for all $A\in H_*(N)$ (and all $S$). Thus, 
	$$c(\alpha,S_b)=-c(\pd(\alpha),-S_b)\leqslant -c(\pd(\alpha)\otimes [b],-S)=-c(\pd(\alpha\otimes \mu_B),-S)$$ and $-c(\pd(\alpha\otimes \mu_B),-S)=c(\alpha\otimes \mu_B,S)$  so we get $c(\alpha,S_b)\leq c(\alpha\otimes \mu_B,S)$.
	Finally, if $\tilde{K}(x,b,\xi)=K(x,\xi)$ for all $(x,b,\xi)\in N\times B\times \mathbb{R}^N$ then $E^\lambda=E_b^\lambda \times B$ so 
	$i_\lambda^*(\alpha\otimes \beta)=(i_\lambda^*\alpha)\otimes \beta$. This gives $c(\alpha\otimes 1,\tilde K)=c(\alpha\otimes \mu_B,\tilde K)=c(\alpha,K)$.
\end{proof}
\begin{rem}\label{rk:gfqired} 
	To understand the context of the previous statement, notice that when a Lagrangian submanifold $L\subset T^*N\times T^* B$ has a \gfqi $S$, and has transverse intersection with a fiber $T^*N\times T^*_bB$ for some $b\in B$, the function $S_b$ is a \gfqi for the reduction $L_b$ of $L\cap T^*N\times T^*_bB$ (which is an immersed Lagrangian of $T^*N$).
\end{rem}

Following \cite[\S 5]{viterbo1992symplectic}, we work on $\mathbb{R}^{2m}\times T^*\mathbb{T}^k\simeq \mathbb{R}^{2m}\times \mathbb{R}^k\times \mathbb{T}^k$ endowed with coordinates $(z,p,q)$. Let $\pi:\mathbb{R}^{2m}\times \mathbb{R}^k\times \mathbb{R}^k\rightarrow \mathbb{R}^{2m}\times \mathbb{R}^k\times \mathbb{T}^k$ be the projection and consider a Hamiltonian diffeomorphism $\psi\in\text{Ham}^c(\mathbb{R}^{2m}\times T^*\mathbb{T}^k)$ with coordinates $(\psi_z,\psi_p,\psi_q)$ generated by $H_t$. It is easy to see that $H_t\circ \pi$ generates a lift $\tilde{\psi}\in \text{Ham}(\mathbb{R}^{2m}\times \mathbb{R}^k\times \mathbb{R}^k)$ such that $$\begin{cases}\tilde{\psi}_z(z,p,\tilde{q}+1)=\tilde{\psi}_z(z,p,\tilde{q})=\psi_z(z,p,q)\quad (\text{with } (z,p,q)=\pi(z,p,\tilde{q}))\\ \tilde{\psi}_p(z,p,\tilde{q}+1)=\tilde{\psi}_p(z,p,\tilde{q})=\psi_p(z,p,q)\\\tilde{\psi}_{\tilde{q}}(z,p,\tilde{q}+1)=\tilde{\psi}_{\tilde{q}}(z,p,\tilde{q})+1\end{cases}$$
Again, the graph of $\tilde{\psi}$ is a Lagrangian submanifold $\Gamma(\tilde{\psi})\subset \overline{\mathbb{R}^{2m}\times \mathbb{R}^{2k}}\times\mathbb{R}^{2m}\times\mathbb{R}^{2k}$ that under $\mathcal{I}$ becomes a Lagrangian submanifold of $T^*\mathbb{R}^{2m}\times T^*\mathbb{R}^{2k}$ whose points are denoted by $\Gamma_{\tilde{\psi}}(z,p,\tilde{q})$ equal to
$$\big(\mathcal{I}(z,\psi_z(z,p,q)),\frac{p+\psi_p(z,p,q)}{2},\frac{\tilde{q}+\tilde{\psi}_{\tilde{q}}(z,p,\tilde{q})}{2},\tilde{q}-\tilde{\psi}_{\tilde{q}}(z,p,\tilde{q}),\psi_p(z,p,q)-p\big).$$
Now $\tilde{\psi}_{\tilde{q}}(z,p,\tilde{q}+1)=\tilde{\psi}_{\tilde{q}}(z,p,\tilde{q})+1$ implies that $\Gamma_{\tilde{\psi}}$ descends to an embedding $\tilde{\Gamma}_\psi:\mathbb{R}^{2m}\times \mathbb{R}^k\times \mathbb{T}^k\rightarrow T^*\mathbb{R}^{2m}\times T^*\mathbb{R}^k\times T^*\mathbb{T}^k$, given by 
$$\tilde{\Gamma}_{\psi}(z,p,q)=\big(\mathcal{I}(z,\psi_z),\frac{p+\psi_p}{2},q-\psi_q,\frac{q+\psi_q}{2},\psi_p-p\big).$$
This embedding $\tilde{\Gamma}_\psi$ is Hamiltonian isotopic to the zero-section, and coincides with the zero-section at infinity. As in the classical situation, $\tilde{\Gamma}(\psi):=\im\tilde{\Gamma}_\psi$ can be compactified to a Lagrangian submanifold $$L_\psi\subset T^*(S^{2m}\times S^{k}\times \mathbb{T}^k),$$
which is Hamiltonian isotopic to the zero-section, and coincides with the zero-section on a neighbourhood of $\{N\}\times S^k\times \mathbb{T}^k$ and of $S^{2m}\times\{N\}\times \mathbb{T}^k$. After normalization (by $S(i^{-1}_S(N,N,0,0))=0$), the \gfqi of $L_\psi$ provides spectral invariants $c(\alpha\otimes\beta\otimes\gamma,\psi)$ for $\alpha\in H^*(S^{2m})$, $\beta\in H^*(S^k)$ and $\gamma \in H^*(\mathbb{T}^k)$. 
%E
As in (\ref{eq:cX}), these invariants can be used to define $c(\alpha\otimes\beta\otimes\gamma,X)$, for subsets $X\subset \R^{2m}\times T^*\T^k$. 

\begin{prop}\label{first inequality}
	If $X$ is a compact subset of $\R^{2m}$ then 
	%If $X\times Y\times \mathbb{T}^k$ is compact then 
	$$c(X)\leq c(\mu\otimes\mu\otimes 1,X\times \{0\}\times \mathbb{T}^k).$$
\end{prop}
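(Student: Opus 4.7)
The plan is to reduce to the compactly supported setting and apply Proposition \ref{prop:redspec} to a suitable lift of $\psi$. By the definition of the capacity on general subsets, it suffices to show that for every open bounded $U \supseteq X$, every $\psi \in \Ham^c(U)$, and every open $V \supseteq X \times \{0\} \times \T^k$, one can build $\tilde\psi \in \Ham^c(V)$ satisfying $c(\mu, \psi) \leq c(\mu \otimes \mu \otimes 1, \tilde\psi)$. Compactness of $X \times \{0\} \times \T^k$ yields an open neighbourhood $B$ of $0 \in \R^k$ such that $U \times B \times \T^k \subseteq V$, and $\tilde\psi$ will be supported in this tube.

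Let $H_t$ generate $\psi$ and let $\beta : \R^k \to [0,1]$ be a cut-off equal to $1$ on a neighbourhood of the origin with $\supp \beta \subseteq B$. Set $\tilde H_t(z,p,q) := \beta(p)\,H_t(z)$. Since $\partial_q \tilde H = 0$, the Hamiltonian flow preserves $p$, hence $\tilde\psi := \tilde\psi^{\tilde H}_1$ is compactly supported in $U \times B \times \T^k$. Because $\beta(0) = 1$ and $\beta'(0) = 0$, Hamilton's equations at $p = 0$ give $\tilde\psi(z,0,q) = (\psi(z),0,q)$, so $\tilde\psi$ coincides with $\psi \times \id$ on the slice $\{p = 0\}$.

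Apply Proposition \ref{prop:redspec} to a normalized \gfqi $\tilde S$ of $L_{\tilde\psi} \subset T^*(S^{2m} \times S^k \times \T^k)$, taking $N = S^{2m} \times \T^k$, $B = S^k$, $b = 0$, and $\alpha = \mu \otimes 1$:
\[
c(\mu \otimes 1, \tilde S_0) \leq c(\mu \otimes \mu \otimes 1, \tilde S) = c(\mu \otimes \mu \otimes 1, \tilde\psi),
\]
where $\tilde S_0(x, q, \xi) := \tilde S(x, 0, q, \xi)$. By Remark \ref{rk:gfqired}, $\tilde S_0$ generates the symplectic reduction of $L_{\tilde\psi}$ along $T^*S^{2m} \times T^*_0 S^k \times T^*\T^k$; unpacking the formula for $\tilde\Gamma_{\tilde\psi}$ together with $\tilde\psi|_{\{p=0\}} = \psi \times \id$ and the fact that $\tilde\psi$ preserves $p$ identifies this reduction with the product Lagrangian $L_\psi \times 0_{T^*\T^k} \subset T^*(S^{2m} \times \T^k)$.

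To finish, observe that the same product Lagrangian also admits the trivial-extension \gfqi $\tilde K(x, q, \xi) := S(x, \xi)$, where $S$ is a normalized \gfqi of $L_\psi$; the last statement of Proposition \ref{prop:redspec} then gives $c(\mu \otimes 1, \tilde K) = c(\mu, S) = c(\mu, \psi)$. Since $\tilde\psi$ is the identity at infinity in $S^{2m}$, both $\tilde S_0$ and $\tilde K$ reduce to the same quadratic form at the north pole, so their canonical normalizations agree and $c(\mu \otimes 1, \tilde S_0) = c(\mu, \psi)$. Taking the supremum over $\psi$ and infima over $U$ and $V$ yields the proposition. The main technical obstacle is the explicit identification of the reduction of $L_{\tilde\psi}$ with the product Lagrangian $L_\psi \times 0_{T^*\T^k}$ via $\tilde\Gamma_{\tilde\psi}$; once that is in hand, the matching of normalizations of the two generating functions is essentially automatic.
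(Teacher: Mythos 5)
Your proof is essentially the paper's: cut off $H$ in the $p$ variable so that $\tilde\psi$ reduces along $\{p=0\}$ to $\psi\times\id$, invoke Proposition~\ref{prop:redspec} to bound $c(\mu\otimes 1,\tilde S_0)$ by $c(\mu\otimes\mu\otimes 1,\tilde S)$, identify the reduction of $L_{\tilde\psi}$ with $L_\psi\times 0_{\T^k}$, and compare $\tilde S_0$ with the trivial extension $\tilde K$. One small imprecision in the last step: the equality $c(\mu\otimes 1,\tilde S_0)=c(\mu\otimes 1,\tilde K)$ does not come from the two functions ``reducing to the same quadratic form at the north pole'' (their quadratic forms at infinity may well differ); rather both are \emph{normalized} gfqi for the same Lagrangian $L_\psi\times 0_{\T^k}$ --- both assign critical value $0$ to the critical point over the north pole --- and the uniqueness theorem for gfqi (Th\'eret, Viterbo) then forces equality of spectral invariants.
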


\noindent{\it Proof:}
Let $\mathcal{U}$ be a bounded \nbd of $X$, $\phi\in \ham^c(\mathcal{U})$. By compactness of $X$ and by definition of the spectral capacities $c$, it is enough to find, for any \nbd $\mathcal{V}$ of $0$ in $\R^k$,  a $\Psi\in \ham^c(\mathcal{U}\times \mathcal{V}\times \T^k)$ such that $c(\mu,\phi)\leqslant c(\mu\otimes \mu\otimes 1,\Psi)$. 

Let $H:\mathbb{R}\times\mathbb{R}^{2m}\rightarrow \mathbb{R}$ be a generator of $\phi$, 
and $\chi \in C^{\infty}_c(\mathcal{V})$ with $\chi(0)=1$ and $\frac{\partial\chi}{\partial p}(0)=0$. The Hamiltonian $\chi H$ of $\mathbb{R}^{2m}\times\mathbb{R}^{k}\times \mathbb{T}^k$ generates a compactly supported Hamiltonian diffeomorphism that we will note $\Psi=(\psi_z,\psi_p,\psi_q)$. It is easy to see that 
$$
\Psi(z,p,q)=(\psi_z(z,p),p,q+C(z,p))
$$ 
with $C(z,p)=\int_0^1\frac{\partial \chi}{\partial p}(p)H(t,z)dt$, and that $C(z,0)=0$ and $\psi_z(z,0)=\varphi(z)$. The embedding $\Gamma_\Psi: \mathbb{R}^{2m}\times \mathbb{R}^k\times\mathbb{T}^k\rightarrow  T^*\mathbb{R}^{2m}\times T^* \mathbb{R}^k\times T^*\mathbb{T}^k$ it thus given by 
$$
\tilde{\Gamma}_\Psi(z,p,q)=(\mathcal{I}(z,\psi_z(z,p)),\,p,\,-C(z,p),\, q+\frac{1}{2}C(z,p),\,0).
$$ 
By definition, when we compactify $\text{Im}\, \tilde{\Gamma}_\Psi$ we get $L_\Psi$ which, by the previous expression, is easily seen to be transverse to $T^*S^{2m}\times T_{0}S^{k}\times T^*\mathbb{T}^k$. Now $$\tilde{\Gamma}_\Psi(z,0,q)=(\mathcal{I}(z,\varphi(z)),\,0,\,0,\, q,\,0).$$ so $L_\Psi\cap T^*S^{2m}\times T_{0}S^{k}\times T^*\mathbb{T}^k=L_\varphi\times\{(0,0)\}\times 0_{\mathbb{T}^k}$ and the reduction is $L_\varphi\times 0_{\mathbb{T}^k}$ which is also Hamiltonian isotopic to the zero-section. 
Therefore, by remark \ref{rk:gfqired}, if $S$ is a \gfqi for $L_\Psi$, $S_{0}$ is a \gfqi for $L_\varphi\times 0_{\mathbb{T}^k}$. On the other hand, if $K$ is a \gfqi for $L_\varphi$ then $\tilde{K}(z,q,\xi)=K(z,\xi)$ is also a \gfqi for $L_\varphi\times 0_{\mathbb{T}^k}$. Moreover, both $S_{0}$ and $\tilde{K}$ have $0$ as the critical value associated to $\{N\}\times \{q\}$, so by uniqueness of \gfqi $c(\mu\otimes 1,\tilde{K})=c(\mu\otimes 1,S_{0})$.  By proposition \ref{prop:redspec}, 
$$
c(\mu,K)=c(\mu\otimes 1,\tilde{K})=c(\mu\otimes 1,S_{0})\leq c(\mu\otimes\mu\otimes 1,S),
$$
which precisely means that $c(\mu,\varphi)\leq c(\mu\otimes\mu\otimes 1,\Psi)$.
\cqfd
The next proposition is a modified version of \cite[proposition 5.2]{viterbo1992symplectic}. Since the proof there is a bit elliptical, (it refers to the proofs of several other propositions of the same paper) we give more indications in section \ref{sec:proofviterbo} below. 

\begin{prop}\label{second inequality}
	Consider a compact set $Z\subset \mathbb{R}^{2m}\times\mathbb{R}^k\times \mathbb{T}^k$, a point $w\in\mathbb{T}^k$ and the reduction $Z_{w}:=(Z\cap\{q=w\})/\mathbb{R}^k$. Then $$c(\mu\otimes\mu\otimes 1,Z)\leq \gamma(Z_{w}).$$
\end{prop}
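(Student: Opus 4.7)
The plan is to reduce, via the definitions of the invariants involved, to an inequality of the form $c(\mu\otimes\mu\otimes 1,\psi)\leq \gamma(\phi)$ for every $\psi\in\ham^c(V)$, where $\phi\in\ham^c(\R^{2m})$ displaces an open neighborhood $U$ of $Z_w$ and $V$ is an open neighborhood of $Z$ depending on $U$ and $\phi$. Unwinding the definition of $c(\mu\otimes\mu\otimes 1,Z)$ as an $\inf$--$\sup$ over neighborhoods and of $\gamma(Z_w)$ as the infimum of displacement energies, such a pointwise inequality yields the proposition.

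To obtain the inequality for fixed $\psi$, I would first apply Proposition \ref{prop:redspec} with $N=S^{2m}\times S^k$, $B=\T^k$ and $b=w$, which gives
\[
c(\mu\otimes\mu\otimes 1,\psi)\leq c(\mu\otimes\mu,S_w),
\]
where $S_w$ is a \gfqi for the symplectic reduction $L_\psi^{(w)}\subset T^*(S^{2m}\times S^k)$ of $L_\psi$ at the cotangent fiber $T^*_w\T^k$. Using the explicit parametrization $\tilde\Gamma_\psi$ recalled above, one identifies $L_\psi^{(w)}$ as the compactification of the graph-like Lagrangian produced by $\psi$ on the slice $\{q=w\}$, with the $\R^k$-factor of $p$ playing the role of a spectator direction. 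Since $\psi$ has compact support in $V\subset$ a small neighborhood of $Z$, the Lagrangian $L_\psi^{(w)}$ coincides with the zero section outside a neighborhood of $Z_w\times K$ for some compact $K\subset S^k$, and by shrinking $V$ one can force $K$ to be fixed and the support region to lie inside $U\times K$.

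The next step is to bound $c(\mu\otimes\mu,S_w)$ by displacement. Consider the product Hamiltonian diffeomorphism $\bar\phi:=\phi\times \id$ on $\R^{2m}\times\R^k\subset T^*(S^{2m}\times S^k)$, extended by the identity to a compactly supported diffeomorphism of the cotangent bundle; this extension satisfies $\gamma(\bar\phi)=\gamma(\phi)$. Since $\phi$ displaces $U$, $\bar\phi$ displaces $U\times K$ and hence the ``support'' of $L_\psi^{(w)}$. The standard Viterbo displacement-energy bound applied to spectral invariants of Lagrangians in a cotangent bundle then yields
\[
c(\mu\otimes\mu,S_w)\leq \gamma(\bar\phi)=\gamma(\phi),
\]
and chaining with the previous inequality gives the desired $c(\mu\otimes\mu\otimes 1,\psi)\leq \gamma(\phi)$.

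The main obstacle is the identification and displacement step for the reduced Lagrangian $L_\psi^{(w)}$: one must verify both that the reduction has the explicit form needed to describe its support in $T^*(S^{2m}\times S^k)$, and that the ``capacity bounded by displacement energy'' argument transfers to this reduced setting with the specific cohomology class $\mu\otimes\mu$. This is precisely where the original reference \cite[Prop.~5.2]{viterbo1992symplectic} is elliptical and where a more careful analysis (carried out in Section \ref{sec:proofviterbo}) is required.
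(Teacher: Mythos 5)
Your overall skeleton matches the paper's: reduce to a pointwise inequality $c(\mu\otimes\mu\otimes 1,\psi)\leq\gamma(\phi)$, apply Proposition~\ref{prop:redspec} to pass from $S$ to $S_w$, then try to bound $c(\mu\otimes\mu,S_w)$ by the displacement energy of $\phi$. The first two steps are essentially correct. But the third step is the entire content of the proposition, and there you invoke a ``standard Viterbo displacement-energy bound applied to spectral invariants of Lagrangians in a cotangent bundle'' that does not exist in the form you need. The standard displacement result bounds the spectral \emph{norm} $\gamma(L)=c(\mu,L)-c(1,L)$, not the individual invariant $c(\mu\otimes\mu,S_w)$; passing from one to the other would require knowing, e.g., that $c(1\otimes 1,S_w)\leq 0$, which is not automatic for the normalization used here and for an immersed reduced Lagrangian. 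Moreover, $L_\psi^{(w)}$ is an \emph{immersed} Lagrangian arising as a reduction, not the image of the zero section under a global Hamiltonian diffeomorphism, so it is not in the setting where a displacement lemma could be quoted off the shelf. Your own ``main obstacle'' paragraph flags exactly this difficulty, but the proposal does not close it.

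Concretely, the paper closes this gap by comparing $S_w$ with a fixed auxiliary \gfqi $\tilde K$ for $L_\phi\times 0_{S^k}$ via the triangle inequality:
\[
c(\mu\otimes\mu,S_w)\leq c(\mu\otimes\mu,S_w-\tilde K)-c(1\otimes 1,-\tilde K),
\]
and then proving (Lemma~\ref{constant}, plus the normalization lemma before it) that $c(\mu\otimes\mu,S^t_w-\tilde K)$ is a critical value of $-\tilde K$ for every $t$ along the isotopy $\psi^t$. This uses a careful analysis of the critical points of $S^t_w-\tilde K$: the displacement condition on $\phi$ forces all intersection points of $L^t_w$ with $L_\phi\times 0$ to lie on the zero section over the region where $\psi^t$ is the identity, so the $S^t_w$-component of the critical value is $0$ after normalization. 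Combined with Sard's theorem (critical values of $-\tilde K$ are totally disconnected) and continuity in $t$, this yields $c(\mu\otimes\mu,S_w-\tilde K)=c(\mu\otimes\mu,-\tilde K)$, whence the bound $c(\mu\otimes\mu,S_w)\leq\gamma(\phi^{-1})=\gamma(\phi)$. None of this machinery (triangle inequality, critical-point analysis, normalization, Sard/continuity) appears in your sketch, so as written the proposal has a genuine missing step rather than a genuinely alternative route.

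A secondary inaccuracy: your description of the ``support'' of $L_\psi^{(w)}$ as $U\times K$ with $K\subset S^k$ compact is not quite the right picture. The reduction is carried out in $T^*(S^{2m}\times S^k)$, the $\R^k$-direction is the $p$-variable (momentum) rather than a compact factor, and the useful support set $U$ in the paper's argument is a \emph{large} open set $(\R^{2m}\times\R^k\times\T^k\setminus\{q=w\})\cup(V\times\R^k\times\T^k)$, not a small neighborhood of $Z$. Getting this right matters because it is how one ensures that critical points at $q=w$ land on the zero section.
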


\subsection{Non-squeezing and symplectic reduction}
Together, propositions \ref{first inequality} and \ref{second inequality} provide the non-squeezing statement we are looking for. Recall that if $Z\subseteq \C^n$ and $W$ is a coisotropic subspace of $\C^n$ then the symplectic reduction of $Z$ is defined by $\Red_W(Z)=\pi_W(Z\cap W)$ where $\pi_W:W\rightarrow W/W^\omega$ is the natural projection.
\begin{thm}\label{thm:compact support}
	Let $X\subseteq \C^m$ be a compact subset, consider $X\times\mathbb{R}^{n-m}\subset \mathbb{C}^m\times \mathbb{C}^{n-m}$ and denote by $W:=\C^m\times i\R^{n-m}$. For every compactly supported Hamiltonian diffeomorphism $\psi$ of $\mathbb{C}^n$ we have $$c(X)\leq\gamma(\Red_W(\psi(X\times \R^{n-m}))).$$
\end{thm}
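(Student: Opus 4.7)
The plan is to reduce Theorem \ref{thm:compact support} to the cylindrical setting of Propositions \ref{first inequality} and \ref{second inequality} by replacing the Lagrangian factor $\R^{n-m}$ with a very large torus. Set $k = n-m$ and use coordinates $(z, P, Q) \in \C^m \times \R^k \times \R^k$ on $\C^n = \C^m \times \C^{n-m}$ in which $P$ and $Q$ are the imaginary and real parts of the last $k$ complex coordinates, so that $W = \{Q = 0\}$ and $X \times \R^{n-m} = X \times \{P = 0\}$. I would choose $L > 0$ so large that $\supp(\psi)$ projects into $(-L/2, L/2)^k$ in the $Q$-factor. Then $\psi$ descends under $Q \mapsto Q \bmod L\Z^k$ to a compactly supported Hamiltonian diffeomorphism $\bar\psi$ of $\R^{2m} \times T^*\T^k_L$, with $\T^k_L = \R^k/L\Z^k$, and the propositions of the previous subsection apply verbatim to $\bar\psi$ (after the harmless symplectic rescaling $(z,P,Q)\mapsto (z,LP,Q/L)$ from $T^*\T^k_L$ to $T^*\T^k$).

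I then chain three ingredients. Proposition \ref{first inequality}, applied to the compact set $X$, gives
$$c(X) \leq c(\mu \otimes \mu \otimes 1, X \times \{0\} \times \T^k_L).$$
Symplectic invariance of the cylindrical spectral capacity under conjugation by $\bar\psi$ gives
$$c(\mu \otimes \mu \otimes 1, X \times \{0\} \times \T^k_L) = c(\mu \otimes \mu \otimes 1, \bar\psi(X \times \{0\} \times \T^k_L)).$$
Finally, Proposition \ref{second inequality} applied to the compact set $Z := \bar\psi(X \times \{0\} \times \T^k_L)$ at $w = 0$ gives
$$c(\mu \otimes \mu \otimes 1, Z) \leq \gamma(Z_0), \qquad Z_0 := (Z \cap \{q = 0\})/\R^k \subseteq \C^m.$$
Combining these three shows $c(X) \leq \gamma(Z_0)$, so the theorem reduces to the set-theoretic equality $Z_0 = \Red_W(\psi(X \times \R^{n-m}))$ inside $\C^m$.

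Verifying this last equality is the main technical point I expect. By definition $Z_0$ is the set of $z$-components of $\psi(x, 0, \tilde Q)$ over $(x, \tilde Q) \in X \times \R^k$ such that the $Q$-output of $\psi(x, 0, \tilde Q)$ lies in $L\Z^k$, whereas $\Red_W(\psi(X \times \R^{n-m}))$ requires the same $Q$-output to vanish exactly. When $(x, 0, \tilde Q) \in \supp(\psi)$, compact support pins both $|\tilde Q|$ and the $Q$-output inside a fixed ball of radius $R$ independent of $L$, so for $L > 2R$ the conditions ($Q$-output)$\in L\Z^k$ and $Q$-output $= 0$ coincide. When $(x, 0, \tilde Q) \notin \supp(\psi)$, $\psi$ is the identity there and the $Q$-output equals $\tilde Q$; the lifts $\tilde Q \in L\Z^k$ all project to the single torus point $q = 0$ and contribute only the point $x \in \C^m$, which is exactly the contribution of $\tilde Q = 0$ in the $\Red_W$ description. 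Hence $Z_0 = \Red_W(\psi(X \times \R^{n-m}))$ and the chain of inequalities closes to $c(X) \leq \gamma(\Red_W(\psi(X \times \R^{n-m})))$.
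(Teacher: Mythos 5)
Your proposal is correct and follows essentially the same route as the paper: view $\psi$ as a compactly supported Hamiltonian diffeomorphism of $\R^{2m}\times T^*\T^{n-m}$, chain Proposition \ref{first inequality}, symplectic invariance of the spectral capacity, and Proposition \ref{second inequality}, then identify the reduced set with $\Red_W(\psi(X\times\R^{n-m}))$. The only difference is that you spell out the choice of a sufficiently large torus and the verification $Z_0=\Red_W(\psi(X\times\R^{n-m}))$, which the paper asserts without detail.
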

\begin{proof}
	Since $\psi$ has compact support, we can view it as a symplectomorphism of $\mathbb{C}^m\times T^*\mathbb{T}^{n-m}\simeq \mathbb{R}^{2m}\times\mathbb{R}^{n-m}\times\mathbb{T}^{n-m}$. In this setting $X\times\mathbb{R}^{n-m}$ 
	is seen as $X\times\{0\}\times \mathbb{T}^{n-m}$ and $\psi(X\times \{0\}\times \mathbb{T}^{n-m})_0$ coincides with $\Red_W(\psi_t(X\times \R^{n-m}))$. Now applying proposition \ref{first inequality}, invariance, proposition \ref{second inequality} and monotonicity we get the chain of inequalities: \begin{gather*}c(X)\leq c(\mu\otimes\mu\otimes 1,X\times \{0\}\times \mathbb{T}^{n-m})=c(\mu\otimes\mu\otimes 1,\psi(X\times \{0\}\times \mathbb{T}^{n-m}))\\
	\leq \gamma(\psi(X\times \{0\}\times \mathbb{T}^{n-m})_0)=\gamma(\Red_W(\psi(X\times \R^{n-m}))).
	\end{gather*}\end{proof}

\begin{cor}[Lagrangian camel theorem] Let $L:=S^1(r)^m\times\mathbb{R}^{n-m}\subset \mathbb{C}^m\times \mathbb{C}^{n-m}$ be a standard Lagrangian tube. Assume that there is a compactly supported Hamiltonian diffeomorphism $\psi$ of $\mathbb{C}^n$ such that $\psi(L)\cap (\mathbb{C}^m\times i\mathbb{R}^{n-m})\subset Z(R)\times i\mathbb{R}^{n-m}$ where $Z(R)$ is a symplectic cylinder of capacity $R$. Then $r\leq R$. \end{cor}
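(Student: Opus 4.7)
The statement is a direct application of Theorem \ref{thm:compact support} to $X=S^1(r)^m\subset\C^m$, with the role of $\R^{n-k}$ played by the $\R^{n-m}$ factor of $L$. First one verifies that the symplectic orthogonal of $W=\C^m\times i\R^{n-m}$ is $W^{\omega}=\{0\}^m\times i\R^{n-m}\subset W$, so $W/W^{\omega}\simeq\C^m$ and, under this identification, $\pi_W(Z(R)\times i\R^{n-m})=Z(R)$. The hypothesis $\psi(L)\cap W\subset Z(R)\times i\R^{n-m}$ therefore becomes, after reduction, $\Red_W(\psi(L))\subset Z(R)$.

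Combining this inclusion with Theorem \ref{thm:compact support} and the monotonicity of $\gamma$ gives the chain
$$c(S^1(r)^m)\leq \gamma(\Red_W(\psi(L)))\leq \gamma(Z(R))=\pi R^2,$$
where the last equality is the value of $\gamma$ on a symplectic cylinder of radius $R$, obtained from conformality and the normalization $\gamma(B^{2}_1\times\C^{m-1})=\pi$.

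The main obstacle is the remaining ingredient, a lower bound $c(S^1(r)^m)\geq \pi r^2$ for Viterbo's $c$-capacity of the standard product Lagrangian torus. This does not follow trivially from monotonicity, because $S^1(r)^m$ has empty interior in $\C^m$, so no symplectic $2m$-ball sits inside it. I would attempt this bound by working in a Weinstein neighborhood of $S^1(r)^m$, symplectomorphic to an open neighborhood of the zero-section in $T^*\T^m$, and constructing compactly supported Hamiltonians whose generating functions have spectral value arbitrarily close to $\pi r^2$ from below; the relevant spectral values are read from actions of $1$-periodic orbits, i.e. from areas of disks with boundary on the torus, and the minimal such action on $S^1(r)^m$ is $\pi r^2$. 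With this lower bound in hand, the chain above forces $\pi r^2\leq \pi R^2$, and hence $r\leq R$; the purely formal reduction step is easy, but the capacity computation for the Lagrangian torus is the genuinely nontrivial input.
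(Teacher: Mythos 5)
Your reduction step and the application of Theorem \ref{thm:compact support} match the paper exactly: with $X=S^1(r)^m$ the theorem gives $c(S^1(r)^m)\leq\gamma(\Red_W(\psi(L)))$, your identification of $W^\omega$ shows that $\pi_W$ is the projection to $\C^m$ so that $\Red_W(\psi(L))\subset Z(R)$, and monotonicity together with $\gamma(Z(R))=\pi R^2$ then give $c(S^1(r)^m)\leq\pi R^2$. You also correctly single out the remaining ingredient, $c(S^1(r)^m)\geq\pi r^2$, as the genuinely nontrivial input. The gap is that you leave this as a sketch; the paper does not prove it either, but cites Th\'eret's \emph{A Lagrangian camel} \cite[Remark 1.5]{theret1999lagrangian}, where $c(S^1(r)^m)=\pi r^2$ is established, and that citation is the intended way to close the argument.

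Your sketch of the lower bound, as written, would need substantial work to become a proof and has two imprecisions worth flagging. First, the spectral value $c(\mu,\psi)$ for $\psi\in\Ham^c(U)$ is the action of a distinguished $1$-periodic orbit of the flow --- a critical value of a gfqi on $T^*S^{2m}$ --- not the symplectic area of a disk with boundary on the torus; the equality of the two (for a suitable family of Hamiltonians of the form $H=h(|z_1|^2,\dots,|z_m|^2)$ supported near $S^1(r)^m$, whose winding orbits have actions tending to $\pi r^2$, together with the verification that $c(\mu,\psi)$ actually selects these orbits) is exactly what has to be proved and is not automatic. Second, the Weinstein neighborhood trades a neighborhood of $S^1(r)^m\subset\C^m$ for a neighborhood of the zero section in $T^*\T^m$, but Viterbo's invariants $c(\mu,\psi)$ are built from the compactification of $\C^m$ to $S^{2m}$ and the $\sup$ over $\Ham^c(U)$ in the definition of $c(U)$ must be taken ambiently, so one cannot freely substitute an intrinsic spectral invariant of the zero section in $T^*\T^m$ for the ambient one without relating the two constructions. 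These are precisely the points Th\'eret's argument takes care of.
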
 
\begin{proof}
	Theorem \ref{thm:compact support} gives $c(S^1(r)^m)\leqslant \gamma(Z(R))=\pi R^2$ and $c(S^1(r)^m)=\pi r^2$ by \cite[remark 1.5]{theret1999lagrangian}. 
\end{proof}

\subsection{Proof of proposition \ref{second inequality}}\label{sec:proofviterbo}
Let $Z\subset \R^{2m}\times \R^k\times \T^k$ and $Z_w:=\big(Z\cap \{q=w\}\big)/\R^k$. We need to show that $c(\mu\otimes\mu\otimes 1,Z)\leqslant \gamma(Z_w)$. 
Let $V\subset \R^{2m}$ be an arbitrary \nbd of $Z_w$, and 
$$
U:=(\mathbb{R}^{2m}\times\mathbb{R}^k\times \mathbb{T}^k\setminus\{w\})\cup (V\times\mathbb{R}^k\times\mathbb T^k).
$$
Obviously, $Z\subset U$ and $U_w=V$, so by monotonicity of $c$, it is enough to prove that $c(\mu\otimes\mu\otimes 1,U)\leqslant \gamma(U_w)$. Notice moreover that any Hamiltonian diffeomorphism of $\R^{2m}$ that displaces $V=U_w$ also displaces its filling 
$\tilde U_w:=\R^{2m}\priv F_w^\infty$,  where $F_w^\infty$ is the unbounded connected component of $\R^{2m}\priv U_w$. Thus $\gamma(U_w)=\gamma(\tilde U_w)$, so we may as well assume that $\R^{2m}\priv U_w$ is connected and unbounded, which we do henceforth.  Let $\psi\in\text{Ham}^c(U)$ and $\varphi\in\text{Ham}^c(\mathbb{C}^m)$ be such that $\varphi(U_{w})\cap U_{w}=\emptyset$. We need to prove that 
$$
c(\mu\otimes\mu\otimes 1,\psi)\leq \gamma(\varphi).
$$

We know that the Lagrangian submanifold $L_\varphi$ in $T^*S^{2m}$ is isotopic to the zero section by a Hamiltonian diffeomorphism $\Phi$ and has a \gfqi $K:S^{2m}\times \mathbb{R}^d\rightarrow \mathbb{R}$. This diffeomorphism $\Phi$ induces a Hamiltonian diffeomorphism $\tilde{\Phi}:=\Phi\times \id$ on $T^*S^{2m}\times T^*S^k$ that verifies $\tilde{\Phi}(0)=L_\varphi\times 0$ and 
$\tilde K(x,y,\eta):=K(x,\eta)$ (defined on $S^{2m}\times S^k\times \R^d$) is a \gfqi for this submanifold. Now for a \gfqi $S$ of $L_\psi$ we have
$$
c(\mu\otimes\mu\otimes 1,\psi)=c(\mu\otimes\mu\otimes 1,S)\leqslant c(\mu\otimes \mu,S_w)\leqslant c(\mu\otimes\mu,S_w-\tilde{K})-c(1\otimes 1,-\tilde{K}).
$$
The first inequality above follows from proposition \ref{prop:redspec}, while the second one is the triangle inequality for spectral invariants \cite[proposition 3.3]{viterbo1992symplectic} (because $(\mu\otimes \mu)\cup (1\otimes 1)=\mu\otimes\mu$). The following lemmas  ensure that $c(\mu\otimes \mu,S_w-\tilde{K})=c(\mu\otimes \mu,-\tilde{K})$, so (applying proposition \ref{prop:redspec}):
$$
c(\mu\otimes\mu\otimes 1,\psi)\leqslant c(\mu\otimes \mu,-\tilde{K})-c(1\otimes 1,-\tilde{K})=c(\mu,-K)-c(1,-K)=\gamma(\phi^{-1}),$$
and $\gamma(\phi^{-1}) =\gamma(\phi)$ which gives the desired inequality.

Consider a Hamiltonian path $\psi^t$ from the identity to $\psi$ in $\text{Ham}^c(U)$
and a Hamiltonian path $\Psi^t$ of $T^*S^{2m}\times T^*S^k\times T^*\mathbb{T}^k$ such that $\Psi^t(0)=L_{\psi^t}$.
This path gives rise to a family of \gfqi $S^t$, continuous in $t$, that generate $L_{\psi^t}$ for all $t$ and that coincide with a fixed quadratic form $Q$ outside a compact set independent of $t$ \cite{laudenbach1985persistance,sikorav1986immersions,sikorav1987intersections}. The first lemma ensures that we can further assume  $S_t$ normalized. 

\begin{lem}
	$G^t:=S^t-c(\mu\otimes 1\otimes \gamma,S^t)$ is a continuous family of normalized generating functions for $L_{\psi^t}$. Moreover there exists a family of fiber preserving diffeomorphisms $\varphi_t$ such that $G^t\circ \varphi_t$ is a continuous family of normalized \gfqi for $L_{\psi^t}$. 
\end{lem}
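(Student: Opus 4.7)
The plan is to verify in sequence that (i) $G^t$ generates $L_{\psi^t}$, (ii) $G^t$ is normalized, (iii) the family $t\mapsto G^t$ is continuous, and then (iv) to construct the fiber-preserving diffeomorphisms $\varphi_t$ converting $G^t$ into an honest \gfqi. Claim (i) is immediate: $G^t$ differs from $S^t$ only by the $t$-dependent constant $c_t:=c(\mu\otimes 1\otimes\gamma,S^t)$, so its fiberwise critical set $\Sigma_{G^t}=\Sigma_{S^t}$ and the associated immersion $i_{G^t}=i_{S^t}$ are unchanged. Claim (iii) reduces to continuity of $t\mapsto c_t$, which is a standard consequence of the $C^0$-Lipschitz dependence of Viterbo's spectral invariants on the generating function, combined with the continuity of $t\mapsto S^t$.

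The heart of the proof is claim (ii), i.e.\ the identity $c(\mu\otimes 1\otimes\gamma,S^t)=S^t(i_{S^t}^{-1}(N,N,0,0))$. Since $L_{\psi^t}$ coincides with the zero-section on a neighbourhood of $\{N\}\times S^k\times\T^k$, the preimage of this set under $i_{S^t}$ is a Morse--Bott critical submanifold $C^t\subset\Sigma_{S^t}$, diffeomorphic to $S^k\times\T^k$, on which $S^t$ is constant and equal to some base critical value $a_t$. A Morse--Bott min-max computation then identifies $c(\mu\otimes 1\otimes\gamma,S^t)$ with $a_t$: via the Thom isomorphism and the K\"unneth decomposition of $H^*(S^{2m}\times S^k\times\T^k)$, the factor $\mu\in H^{2m}(S^{2m})$ accounts for the normal index of $C^t$ in the $S^{2m}$-direction, while $1\in H^0(S^k)$ and $\gamma\in H^k(\T^k)$ detect the full cohomology of $C^t$ tangent to itself. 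This gives $c_t=a_t$ and hence $G^t(i_{G^t}^{-1}(N,N,0,0))=0$.

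For claim (iv), observe that outside a compact set in $\xi$ the function $G^t$ equals $Q-c_t$, which fails to be a \gfqi only by the additive constant. Choose $R>0$ such that $S^t(x,\xi)=Q(\xi)$ whenever $|\xi|\geq R$ (uniformly on a compact range of $t$), and construct a smooth family of fiber-preserving diffeomorphisms $\varphi_t(x,\xi):=(x,\phi_t(\xi))$ with $\phi_t:\R^N\to\R^N$ equal to the identity on $\{|\xi|\leq R\}$ and satisfying $Q(\phi_t(\xi))=Q(\xi)+c_t$ for $|\xi|$ sufficiently large. Such $\phi_t$ exist because the level sets of the non-degenerate (and, in the standard stabilized setting, indefinite) quadratic form $Q$ foliate the complement of the origin and can be continuously shifted by the constant $c_t$. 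Then $G^t\circ\varphi_t$ coincides with $G^t$ on $\{|\xi|\leq R\}$, so it still generates $L_{\psi^t}$ and is normalized, while at infinity it equals $Q(\phi_t(\xi))-c_t=Q(\xi)$, giving the desired normalized \gfqi.

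The principal obstacle I anticipate is the Morse--Bott identification in claim (ii): pinning down exactly which class on $S^{2m}\times S^k\times\T^k$ detects the base critical submanifold at the correct sublevel, and carefully tracking the Thom isomorphism through the K\"unneth factors, is where the real technical content lies. The construction of $\varphi_t$ is more routine but requires attention to smoothness in $t$ as $c_t$ varies (in particular through $0$).
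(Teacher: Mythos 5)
Your parts (i), (iii) and (iv) are fine, and (iv) is in substance the paper's own construction: the paper takes $\phi_t:=\Phi^1_{X_t}$ for the vector field $X_t(\xi)=(1-\chi(\xi))\,c_t\,\grad Q(\xi)/\Vert\grad Q(\xi)\Vert^2$, which shifts the values of $Q$ by $c_t$ outside a compact set, exactly the "level-shifting" diffeomorphism you describe. The genuine gap is in your step (ii), which is the heart of the lemma. First, your geometric picture is off: since $L_{\psi^t}$ coincides with the zero section over a whole \emph{neighbourhood} of $\{N\}\times S^k\times\T^k$ (and of $S^{2m}\times\{N\}\times\T^k$), the corresponding critical locus of $S^t$ is an open-set-worth of critical points of dimension $2m+2k$, not a closed Morse--Bott submanifold diffeomorphic to $S^k\times\T^k$, so the Morse--Bott index bookkeeping you invoke does not apply as stated. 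Second, and more seriously, even granting some localization framework, you give no argument that the \emph{global} min-max value $c(\mu\otimes 1\otimes\gamma,S^t)$ is carried by this piece at infinity rather than by other critical points of $S^t$ (those coming from fixed points of $\psi^t$ inside its support, which produce other intersections of $L_{\psi^t}$ with the zero section and other critical values). Asserting "$c_t=a_t$" is precisely the content to be proved, and a local index computation at one critical set cannot by itself identify which critical value a spectral invariant selects.

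The paper closes this gap with a sandwich argument via Proposition \ref{prop:redspec}: restricting $S^t$ at the north pole of the $S^{2m}$ factor and at the north pole of the $S^k$ factor gives two functions, $S^t(N,\cdot,\cdot,\cdot)$ and $S^t(\cdot,N,\cdot,\cdot)$, each of which generates a zero section and therefore has a \emph{unique} critical value; these two values coincide because $S^t(i_{S^t}^{-1}(N,N,q,0))$ is common to both. Proposition \ref{prop:redspec} then yields
$$
c(1\otimes\gamma,S^t(N,\cdot,\cdot,\cdot))\;\leq\; c(\mu\otimes 1\otimes\gamma,S^t)\;\leq\; c(\mu\otimes\gamma,S^t(\cdot,N,\cdot,\cdot)),
$$
and since both outer quantities equal that single critical value, one gets $c_t=S^t(i_{S^t}^{-1}(N,N,q,0))$, which simultaneously gives the normalization of $G^t$ and (by continuity of spectral invariants in $S^t$) the continuity in $t$. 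If you want to keep your outline, you should replace the Morse--Bott paragraph by this restriction/reduction argument (or supply an actual proof that the min-max localizes at the zero-section piece at infinity); as written, step (ii) is not established.
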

\begin{proof}
	To start with, we know that both functions $S^t_N(p,q,\xi)=S^t(N,p,q,\xi)$ and $S_N^t(z,q,\xi)=S^t(z,N,q,\xi)$ generate the zero sections so they have just one critical value. Moreover $S^t(i_{S^t}^{-1}(N,N,q,0))$ is a common critical value so they are both the same. Using proposition \ref{prop:redspec} we get $c( 1\otimes \gamma,S_N^t)\leq c(\mu\otimes 1\otimes \gamma,S^t)\leq c(\mu\otimes \gamma,S_N^t)$ so $c(\mu\otimes 1\otimes \gamma,S^t)=S^t(i_{S^t}^{-1}(N,N,q,0))$ determines continuously the critical value at infinity.
	
	For the second part, define $c_t:=c(\mu\otimes 1\otimes \gamma,S^t)$ and recall that $S^t$  equals  $Q$ outside a compact set. Let $\chi:\R^N\to [0,1]$ be a compactly supported function with $\chi\equiv 1$ in a \nbd of $0$, and $X_t(\xi):=(1-\chi(\xi))c_t\frac {\grad Q(\xi)}{\Vert\grad Q(\xi)\Vert^2}$, seen as an autonomous vector field ($t$ is not the parameter of integration). This vector field $X_t$ is well-defined and complete because $Q$ is non-degenerate, so $\phi_t:=\Phi_{X_t}^1$ is well-defined. Moreover, if $\xi$ lies far away in $\R^N$, $\Phi_{X_t}^r(\xi)$ remains on the set $\{1-\chi=1\}$ for all $r\in [0,1]$, so $Q\circ \Phi^r_{X_t}(\xi)=Q(\xi)+rc_t$. As a consequence, $(Q-c_t)\circ \phi_t=Q$ outside a compact set, so $G_t\circ \phi_t:=G_t(z,p,q,\phi_t(\xi))$ is a \gfqi for $L_{\psi^t}$. Since moreover $G_t$ is normalized, so is $G_t\circ \phi_t$. Finally, the family $\phi_t$ is obviously continuous in the $t$ variable. 
\end{proof}

\begin{lem}\label{constant}
	Let $S^t$ be a continuous family of normalized \gfqi for the Lagrangian $L_{\psi^t}$. Then
	$c(\mu\otimes\mu,S^t_w-\tilde{K})$ is a critical value of $-\tilde{K}$ and 
	as a consequence $c(\mu\otimes \mu,S_w-\tilde{K})=c(\mu\otimes \mu,-\tilde{K})$.
\end{lem}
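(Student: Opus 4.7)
The plan is to establish the first claim — that $c_t := c(\mu\otimes\mu, S^t_w - \tilde K)$ is a critical value of $-\tilde K$ for every $t\in[0,1]$ — and to deduce the equality $c(\mu\otimes\mu, S_w - \tilde K) = c(\mu\otimes\mu, -\tilde K)$ from it by continuity plus a rigidity argument. For continuity: $S^t$ is a continuous family of \gfqi with common quadratic part $Q$ at infinity, hence so is $S^t_w - \tilde K$ (with common quadratic part $Q\ominus Q'$, where $Q'$ is the quadratic part of $\tilde K$); the standard continuity property of spectral invariants attached to such families then yields continuity of $t \mapsto c_t$.

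The crux is showing that $c_t$ is a critical value of $-\tilde K$. By min-max, $c_t$ is attained at a critical point of $S^t_w - \tilde K$, which corresponds to an intersection point in $T^*(S^{2m}\times S^k)$ of the symplectic reduction at $q = w$ of $L_{\psi^t}$ with $L_\varphi \times 0_{S^k}$. The first Lagrangian equals the zero section over $(S^{2m}\setminus V)\times S^k$ (since $\psi^t\in\Ham^c(U)$ and $U_w = V$), while the second equals the zero section outside $\supp(\varphi)\times S^k$. The hypothesis $\varphi(V)\cap V = \emptyset$ rules out all intersections of their non-trivial parts, since such an intersection would produce a $z\in V$ with $\varphi(z)\in V$. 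Consequently every critical point of $S^t_w - \tilde K$ lies on the connected region where the reduction is the zero section; on that region, the normalization of $S^t$ at $(N_{S^{2m}}, N_{S^k})$ (which lies in this region for every $t$) forces $S^t_w = Q$, so the critical value of $S^t_w - \tilde K$ there coincides with the value of $-\tilde K$ at an associated critical point of $-\tilde K$. Hence $c_t$ is a critical value of $-\tilde K$.

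Finally, the set of critical values of $-\tilde K$ is compact with Lebesgue measure zero by Sard's theorem, hence totally disconnected in $\R$, so the continuous function $t\mapsto c_t$ with values in this set is constant. At $t=0$, $L_{\psi^0}$ is the zero section and one may take $S^0 = Q$ (already normalized), whence $S^0_w - \tilde K = Q - \tilde K$ and stabilization invariance of spectral invariants gives $c_0 = c(\mu\otimes\mu, -\tilde K)$. Combining, $c_1 = c(\mu\otimes\mu, S_w - \tilde K) = c(\mu\otimes\mu, -\tilde K)$, as desired. The main obstacle is the second step, where the displacement hypothesis must be combined carefully with the normalization of $S^t$ to confirm that every critical value of $S^t_w - \tilde K$ really is a critical value of $-\tilde K$; the other steps are essentially routine properties of spectral invariants of generating functions quadratic at infinity.
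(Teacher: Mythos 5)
Your proof follows the same strategy as the paper's: analyze the critical points of $S^t_w - \tilde K$ as intersections of the reduction of $L_{\psi^t}$ with $L_\varphi\times 0$, use the displacement hypothesis to force all such intersections onto the zero section, invoke normalization and connectedness to kill the $S^t_w$-contribution to the critical value, then conclude by Sard's theorem, continuity, and the computation at $t=0$. The extra explicit evaluation of $c_0$ at $t=0$ is a helpful addition, since the paper leaves this step implicit.

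Two points of imprecision deserve mention. First, the statement that the reduction of $L_{\psi^t}$ ``equals the zero section over $(S^{2m}\setminus V)\times S^k$'' glosses over the change of coordinates $\mathcal{I}$: the base point in $T^*\R^{2m}$ of a point of $L^t_w$ is the \emph{midpoint} $\tfrac{1}{2}(z+\psi^t_z(z,p,w))$, not $z$ itself, so the non-trivial part of the reduced Lagrangian does not simply lie ``over $V$.'' The paper handles this carefully by working with $\mathcal{I}(U_w\times U_w)$ and pulling back by the isotopy $\Phi$. Your conclusion — that an intersection of the non-trivial parts would produce $z\in V$ with $\varphi(z)\in V$ — is nevertheless correct once one observes that matching base points and fibers forces the pairs $(z,\psi^t_z)$ and $(z,\varphi(z))$ to coincide, and that $q=\psi^t_q=w$ forces both $z$ and $\psi^t_z$ into $U_w=V$ when $(z,p,w)\in U$. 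Second, the claim that ``the normalization forces $S^t_w=Q$'' on the zero-section region is an overstatement: the normalization only forces the \emph{critical value} of $S^t$ at the critical points lying over $U^c$ to vanish (this is the connectedness argument: $dS^t$ vanishes along the embedded copy of the connected set $U^c\supset\{N\}$ in $\Sigma_{S^t}$, so $S^t$ is constant there and equal to $0$ by normalization). As a function, $S^t_w$ need not coincide with $Q$ away from infinity. The conclusion you need — that the critical value of $S^t_w-\tilde K$ is $-\tilde K(z,\eta)$ for an associated critical point $(z,\eta)$ of $-\tilde K$ — still holds, so the argument closes correctly.
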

\begin{proof}
	Recall that points in $L_{\psi^t}$ are of the form 
	$$
	\tilde{\Gamma}_{\psi^t}(z,p,q)=\big(\mathcal{I}(z,\psi_z^t),\frac{p+\psi_p^t}{2},q-\psi_q^t,\frac{q+\psi^t_q}{2},\psi^t_p-p\big)
	$$ 
	plus other points on the zero section that come from compactifying. Moreover, the functions $S^t_w$ formally generate the sets of points  
	$$
	\big(\mathcal{I}(z,\psi_z^t),\frac{p+\psi^t_p}{2},q-\psi^t_q\big)\quad\text{ for points $(z,p,q)$ that verify }\quad \frac{q+\psi^t_q}{2}=w,
	$$
	plus other points in the zero section. This set is denoted henceforth $L^t_w$. 
	Recall that the notation $S^t_w-\tilde K$ stands for the function $(z,p,\xi,\eta)\mapsto S^t(z,p,w,\xi)-K(z,\eta)$. 
	It is enough to prove that all critical points $(z,p,\xi,\eta)$ of $S^t_w-\tilde K$ are such that $(z,\eta)$ is a critical point of $-\tilde K$, while $(z,p,\xi)$ is a critical point of $S^t_w$ with critical value $0$. Letting $x:=(z,p)$, such a critical point verifies 
	$$
	\frac{\partial S^t_w}{\partial x}=\frac{\partial \tilde K}{\partial x}\quad\text{ and }\quad\frac{\partial S^t_w}{\partial\xi}=\frac{\partial \tilde K}{\partial \eta}=0,
	$$
	so it is associated to an intersection point of $L_w^t$ and $L_\phi\times 0$ in the fiber of $(z,p)$. This intersection point therefore verifies:
	$$
	q-\psi_q^t=0\quad\text{ and }\quad \frac{q+\psi^t_q}{2}=w \quad(\text{so } q=\psi^t_q=w),
	$$ 
	or will be on the zero section coming from critical points of $S^t$ at infinity. We claim that such a point of intersection must lie on $\mathcal{I}(U_w\times U_w)^c\times T^*S^k$. Indeed, if $\mathcal{I}(U_w\times U_w)\times T^*S^k \cap (L_\varphi\times 0) \neq \emptyset$, then $\Phi^{-1}(\mathcal{I}(U_w\times U_w))\cap 0\neq \emptyset$.
	But $\Phi^{-1}(\mathcal{I}(U_w\times U_w))=\mathcal{I}(\varphi^{-1}(U_w)\times U_w)$ does not intersect the zero section because $\varphi$ displaces $U_w$. 
	This in turn implies that the intersection point is on the zero section: if a point of $L^t_w$ is in $\mathcal{I}(U_w\times U_w)^c\times T^*S^k$, $(z,\psi_z^t)\in (U_w\times U_w)^c$ so $z\notin U_w$ or $\psi_{z}^t\notin U_w$. In both cases, $\psi^{t}(z,p,w)=(z,p,w)$ because $q=\psi_q^t=w$, and $\psi^t$ has support in $U$, which intersects $\{q=w\}$ along $U_w\times \R^k$.
	Thus, the point $\tilde \Gamma_{\psi^t}(z,p,w)$ is on the zero section, $(z,p,w,\xi)$ is indeed a critical point of $S_t$ and as a consequence $(z,\eta)$ is a critical point of $-\tilde K$. 
	In addition $(z,p,w)$ is in $U^c$ because $z\notin U_w$.
	
	Now we prove that all the points in $U^c$ have critical value $0$. Since $\supp \psi_t\Subset U$ and $U^c$ is  connected, there is an open connected  set $W$ that contains $U^c$ and that does not intersect $\supp \psi_t$ (for all $t$). Then $0_{W}\subset L_t$ so if $j:W\hookrightarrow L_t$ is the inclusion on the zero section, $f:=i_{S^t}^{-1}\circ j: W\rightarrow \Sigma_{S^t}$ is an embedding into the set of critical points. The open set $W$ is connected so $S^t\circ f$ is constant and all the points in $W$ have the same critical value. The fact that $S^t$ is normalized now implies that this value is zero. 
	
	Finally, Sard's theorem ensures that the set of critical values of $-\tilde{K}$ has measure zero, so it is  totally disconnected.  By continuity of the invariants, $c(\mu\otimes\mu,S^t_w-\tilde{K})$ is therefore constant, so $c(\mu\otimes \mu,-\tilde{K})=c(\mu\otimes\mu,S^1_w-\tilde{K})$.
\end{proof}

\subsection{Proof of the sub-quadratic case}
We proceed with the proof of Theorem \ref{thm:coisotropic nonsqueezing}. We reduce the sub-quadratic case to the compactly supported case and then use Theorem \ref{thm:compact support} to conclude. Note that $H$ is sub-quadratic if and only if for every $\epsilon>0$ there is an $A_\epsilon\geq 0$ such that $\lvert\nabla H_t(z)\rvert\leq A_\epsilon+ \epsilon\lvert z\rvert$. The following proposition implies that for sub-quadratic $H$ the map $\psi_t^H$ verifies the coisotropic non-squeezing property for every $t\in\R$.

\begin{prop}\label{prop: less than linear}
	Let $H_t$ be a Hamiltonian of $\C^n$ such that $\lvert\nabla H_t(z)\rvert\leq A+ B\lvert z\rvert$. Let $X\subset \C^k$ be a compact subset and consider the coisotropic subspace $W=\C^k\times i\R^{n-k}$. Then the flow $\psi_t$ of $H$ verifies $$c(X)\leq \gamma(\Red_W(\psi_t(X\times\R^{n-k})))$$ for every $|t|< \frac{ln 2}{3B}$. 
\end{prop}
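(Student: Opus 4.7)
The plan is to reduce the statement to the compactly supported case treated in Theorem~\ref{thm:compact support}. Since $\psi=\psi_t^H$ is not necessarily compactly supported, the idea is to replace $H$ by a truncation $\tilde H_t=\chi H_t$ vanishing outside a large ball $B_M\subset\C^n$, and to show that on the initial conditions in $X\times\R^{n-k}$ that are actually relevant for the reduction $\Red_W$, the flows $\psi_t$ and $\tilde\psi_t$ coincide, so that Theorem~\ref{thm:compact support} can be applied to $\tilde\psi_t$ with the very same reduction set.

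First I combine the growth hypothesis $|\nabla H_t(z)|\leq A+B|z|$ with Gronwall's inequality to obtain the trajectory bound $|\psi_s(z)|\leq e^{B|s|}|z|+\frac{A}{B}(e^{B|s|}-1)$ and, by integrating the vector field along the flow, the displacement bound
$$|\psi_t(z)-z|\leq(e^{B|t|}-1)\Bigl(|z|+\tfrac{A}{B}\Bigr).$$
Writing $z=(x,r)\in X\times\R^{n-k}$, the condition $\psi_t(z)\in W=\C^k\times i\R^{n-k}$ is equivalent to the vanishing of the $\R^{n-k}$-component of $\psi_t(z)$, so $|r|\leq|\psi_t(z)-z|$. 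Since $|z|\leq D+|r|$ with $D:=\max_{x\in X}|x|$, the displacement bound yields an explicit a priori bound $|r|\leq R_0$, valid as long as $e^{B|t|}-1<1$, i.e.\ $|t|<(\ln 2)/B$.

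Next I choose a smooth radial cutoff $\chi$ equal to $1$ on $B_M$, supported in $B_{2M}$, with $|\nabla\chi|$ of order $1/M$. Integrating the gradient bound shows $|H_t(z)|\leq|H_t(0)|+A|z|+\tfrac{B}{2}|z|^2$, from which $\|\nabla\tilde H_t\|_\infty\leq CM$ on $B_{2M}$ for some $C=C(A,B)$. Repeating the previous argument with $\tilde\psi_t$ in place of $\psi_t$ shows that the analogous relevant set $\{z\in X\times\R^{n-k}:\tilde\psi_t(z)\in W\}$ is bounded, with $|r|\leq CM|t|$. The key step is now to choose $M$ large enough that (i) every $\psi$-trajectory starting from $|z_0|\leq D+R_0$ stays in $B_M$ throughout $[0,t]$, and (ii) every $\tilde\psi$-trajectory starting from $|z_0|\leq D+CM|t|$ does the same. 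Condition (i) gives a fixed lower bound $M\geq e^{B|t|}(D+R_0)+\tfrac{A}{B}(e^{B|t|}-1)$; condition (ii) gives a constraint of the form $M\bigl(1-K|t|e^{B|t|}\bigr)\geq \mathrm{const}$ for some $K=K(A,B)$. The hypothesis $|t|<(\ln 2)/(3B)$, which forces $e^{3B|t|}<2$, is precisely what makes both conditions simultaneously satisfiable.

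With such an $M$, ODE uniqueness gives $\tilde\psi_s(z)=\psi_s(z)$ for every $z$ in the union of the two relevant sets, so these sets coincide and $\Red_W(\psi_t(X\times\R^{n-k}))=\Red_W(\tilde\psi_t(X\times\R^{n-k}))$. Applying Theorem~\ref{thm:compact support} to the compactly supported diffeomorphism $\tilde\psi_t$ then yields
$$c(X)\leq\gamma(\Red_W(\tilde\psi_t(X\times\R^{n-k})))=\gamma(\Red_W(\psi_t(X\times\R^{n-k}))),$$
as desired. The main obstacle is the cutoff estimate: the extra term $H\nabla\chi$ appearing in $\nabla\tilde H$ is of size $O(M)$, so $\tilde\psi_t$ is worse-controlled than $\psi_t$, and the interplay between the $\psi$-Gronwall estimate (forcing $M$ at least of size $e^{B|t|}R_0$) and the $\tilde\psi$-Gronwall estimate (requiring $CM|t|$ to be comfortably smaller than $M$) is what pins down the sharp time window $|t|<(\ln 2)/(3B)$.
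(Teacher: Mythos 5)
Your overall strategy matches the paper's exactly: truncate $H$ by a radial cutoff equal to $1$ on a large ball and supported in a ball of twice the radius, show that the ``camel trajectories'' (flow trajectories of points in $X\times\R^{n-k}$ that land in $W$ at time $t$) of both the original and truncated flows stay inside a fixed ball, so that the two flows coincide there, and apply Theorem~\ref{thm:compact support} to the compactly supported truncation. The logical structure is sound, and the step using ODE uniqueness to conclude $\Red_W(\psi_t(X\times\R^{n-k}))=\Red_W(\tilde\psi_t(X\times\R^{n-k}))$ is correct.

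However, there is a genuine gap in the quantitative estimate. You bound the truncated gradient only by a \emph{flat} constant, $\|\nabla\tilde H_t\|_\infty\leq CM$ on $B_{2M}$, and this is too crude to produce the stated time window. With a flat bound, the displacement of $\tilde\psi$ is at most $CM|t|$ (no exponential arises, so the factor $e^{B|t|}$ you write in the constraint $M(1-K|t|e^{B|t|})\geq \mathrm{const}$ is unjustified), and the solvability condition you obtain is of the form $2C|t|<1$ with $C$ roughly of size $6B$; this yields a window on the order of $|t|<1/(12B)$, which is strictly smaller than $(\ln 2)/(3B)\approx 0.231/B$. The claim that $e^{3B|t|}<2$ ``is precisely what makes both conditions simultaneously satisfiable'' does not follow from your estimates. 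What the paper observes instead is sharper: because $|\chi'(|z|)|\leq 4/|z|$ on the support of $\chi'$ and $|H_t(z)|\leq A|z|+\tfrac{B}{2}|z|^2$ (normalizing $H_t(0)=0$), the truncated Hamiltonian still satisfies a \emph{linear-growth} bound $|\nabla\tilde H_t(z)|\leq 5A+3B|z|$ whose slope $3B$ is independent of the cutoff radius $M$. Running the very same Gronwall argument on $\tilde\psi$ with this linear bound, rather than the flat $O(M)$ bound, shows that its camel points satisfy $|z|\leq\dfrac{r+5A/(3B)}{2-e^{3B|t|}}$, finite and $M$-independent precisely when $e^{3B|t|}<2$, i.e.\ $|t|<(\ln 2)/(3B)$. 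That is where the stated threshold comes from, and it is the piece missing from your argument.
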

\begin{proof}
	By considering the Hamiltonian $\frac{1}{B}H_{\frac{t}{B}}$ we may suppose $B=1$. Using Gronwall's lemma we get the inequalities $$ \lvert\psi_s(z)\rvert\leq e^s(\lvert z\rvert+A)-A\quad \text{and}\quad \lvert \psi_s(z)-z\rvert\leq (e^s-1)(\lvert z\rvert+A).$$ Suppose that $z\in\C^n$ verifies for a fixed $t\in \R$ $$z\in X\times \R^{n-m}\quad\text{and}\quad \psi_t(z)\in \C^m\times i\R^{n-m}.$$ We call such a $z$ a camel point and $\psi_{[0,t]}(z)$ a camel trajectory. Denote $\pi^m_+$ the natural projection on $\R^{n-m}$ of coordinates $(q_{m+1},\dots,q_n)$. Using the fact $X$ is contained in a ball of a certain radius $r$ and that $\pi^m_+\psi_t(z)=0$ we find $$\lvert z\rvert\leq r+ |\pi_+^m(z)|=r+ |\pi_+^m(\psi_t(z)-z)|\leq r+(e^t-1)(\lvert z\rvert+A).$$  In particular we see that if $e^t<2$, so if $t<\ln 2$, the camel points verify $$\lvert z\rvert\leq \frac{r+A}{2-e^t}.$$ Using the inequalities at the beginning of the proof we see that the set of camel trajectories is contained in a ball of radius $C=C(t,A,r)$ centered at the origin. The idea now is to build from $\psi_t$ a compactly supported Hamiltonian diffeomorphism $\varphi_t$ that coincides with $\psi_t$ on $B(0,R)$ for some $R>C$ and whose camel trajectories are also contained in this ball for $|t|\leq t_0$ for some $t_0>0$. Then the camel trajectories of both flows coincide so we can apply Theorem \ref{thm:compact support} for $\varphi_t$ to get the desired result. 
	
	Let $\chi:\R\rightarrow \R$ be a smooth function with values on $[0,1]$ that equals $1$ over the interval $[0,R]$, vanishes over $[2R,+\infty[$ and such that $\lvert \chi'\rvert\leq 2/R$. Note that on the support of $\chi'$ we have $|z|\leq 2R$ so $$|\chi'(|z|)|\leq \frac{2}{R}\leq \frac{4}{|z|}.$$  Define $G_t(z)=\chi(\lvert z\rvert)H_t(z)$ (the value of $R$ will be chosen later). It is a compactly supported function that generates a Hamiltonian diffeomorphism $\varphi_t$. Since we may suppose that $H_s(0)=0$ for all $s$ we have $|H_s(z)|\leq A|z|+\frac{|z|^2}{2}$.  We have $$\lvert\nabla G_s(z)\rvert=\lvert \chi'(\lvert z\rvert)\frac{z}{\lvert z\rvert}H_s(z)+\chi(\lvert z\rvert)\nabla H_s(z)\rvert\leq 4A+2|z|+A+|z|\leq 5A+3|z|$$ and the bound does not depend on $R$. In particular, by the same arguments as above, if $|t|\leq \frac{\ln 2}{3}$ then the camel trajectories of $\varphi_t$ are bounded by a constant independent of $R$. Choose $R$ big enough to contain the camel trajectories of $\psi_t$ and $\varphi_t$ and the proposition follows.  \end{proof}

The time bound in Proposition \ref{prop: less than linear} is not optimal and one may get a better one modifying the bound for $|\chi'|$, but this bound cannot be extended much more since the statement fails for bigger $t$  (see Remark \ref{rem: squeezing}).

\section{Hamiltonian PDEs}
Let $E$ be a real Hilbert space. A (strong) symplectic form on a real Hilbert space is a continuous 2-form $\omega:E\times E\rightarrow \R$ which is non-degenerate in the sense that the associated linear mapping $$\Omega:E\rightarrow E^*\quad  \text{ defined by }\quad \xi\mapsto \omega(\xi,\cdot)$$ is an isomorphism. Let $H:E\rightarrow \R$ be a smooth Hamiltonian function. In the same way as in the finite dimensional case one can define the vector field $X_H(u)=\Omega^{-1}(dH(u))$ and consider the ODE $$\dot{u}=X_H(u).$$ The situation  encountered in examples is however a little bit different. In most cases the Hamiltonian $H$ is not defined on the whole space $E$ but only on a dense Hilbert subspace $D_H(E)\subseteq E$. This raises the question of what a solution is and how to construct it.

\subsection{Semilinear Hamiltonian equations}

Denote by $\langle \cdot,\cdot\rangle$ the scalar product of $E$. Consider an anti-self-adjoint isomorphism $\bar J:E\rightarrow E$ and supply $E$ with the strong symplectic structure $$\omega(\cdot,\cdot)=\langle \bar J\cdot,\cdot\rangle.$$ 
Denote $J=(\bar J)^{-1}$ which is also a skew adjoint isomorphism of $E$. Take a possibly unbounded linear operator $A$ with dense domain such that $JA$ generates a $C^0$ group of (symplectic) transformations $$\{  e^{tJA} \,|\, t\in \R\} \quad\text{ with }\quad \lVert e^{tJA}\rVert_E\leq Me^{N|t|}$$ and consider the Hamiltonian function $$H_t(u)=\frac{1}{2}\langle Au,u\rangle+h_t(u),$$ where $h:E\times \R\rightarrow \R$ is smooth. The corresponding Hamiltonian equation has the form $$\dot u=X_H(u)=JAu+J\nabla h_t(u).$$ In this case the domain of definition of the Hamiltonian vector field is the same as the domain $D(A)$ of $A$ which is a dense subspace of $E$. This implies that classical solutions can only be defined on $D(A)$. More precisely by a \textit{classical solution} we mean a function $u:[0,T[\rightarrow E$ continuous on $[0,T[$, continuously differentiable on $]0,T[$, with $u(t)\in D(A)$ for $0<t<T$ and such that the equation is satisfied on $[0,T[$. 
Nevertheless the boundedness of the exponential allows us to define solutions in the whole space $E$ via Duhamel's formula:

\begin{defin} A continuous curve $u(t)\in \cc([0,T];E)$ is a \textit{(mild) solution} of the Hamiltonian equation in $E$ with initial condition $u(0)=u_0$ if for $0\leq t\leq T$,
	$$u(t)=e^{tJA}u_0+\int_0^t e^{(t-s)JA}J\nabla h_s(u(s))ds.$$
\end{defin}
One can easily verify that if $u(t)$ is a classical solution, then it is also a mild solution. For semilinear equations we know (see for example \cite{pazy1983semigroups}) that if $\nabla h$ is locally Lipschitz continuous, then for each initial condition there exists a unique solution which is defined until blow-up time. If moreover $\nabla h$ is continuously differentiable then the solutions with $u_0\in D(A)$ are classical solutions of the initial value problem. Locally we get a smooth flow map $\Phi_t:\mathcal{O}\subseteq E\rightarrow E$ defined on an open set $\mathcal{O}$. If every solution satisfies an a priory estimate $$\lVert u(t)\rVert_E\leq g(t,u(0))<\infty$$ where $g$ is a continuous function on $\R\times E$, then all flow maps $\Phi_t:E\rightarrow E$ are well defined and smooth. This is the case for example if $\lVert\nabla h_t(u)\rVert_E\leq C$. Remark that the choice of the linear map $A$ is arbitrary. Indeed if $JA$ generates a continuous group of transformations and $B$ is a bounded linear operator then $J(A+B)$ is an infinitesimal generator of a group $e^{tJ(A+B)}$ on $E$ satisfying $\lVert e^{tJ(A+B)}\rVert_E\leq Me^{N+M\lVert B\rVert|t|}$. One can then consider the linear part $J(A+B)$ and set $J\nabla h_t-JB$ as the nonlinear part. This indeterminacy is only apparent: classical solutions verify Duhamel's formula for $JA$ and $J(A+B)$ so both flow maps coincide over the dense subspace $D(A)$ which by continuity implies that the two flows are equal. 

\subsection{Nonlinear string equation}
Consider the periodic nonlinear string equation $$\ddot u=u_{xx}-f(t,x,u),\qquad u=u(t,x),$$ where $x\in \T=\R/ 2\pi \Z$ and $f$ is a smooth function which is bounded and has at most a polynomial growth in $u$, as well as its $u-$ and $t-$derivatives:$$\Big\lvert\frac{\partial ^a}{\partial u^a}\frac{\partial ^b}{\partial t^b}f(t,x,u)\Big\rvert\leq C_k(1+\lvert u\rvert)^{M_k},\quad for\quad\text{for}\quad a+b=k\quad \text{and all}\quad k\geq 0, $$ with $M_0=0$. Here $C_k$ and $M_k$'s are non-negative constants.  We now describe the Hamiltonian structure of this equation. Denote by $B$ the operator $B=(-\partial^2/\partial x^2+1)^{1/2}$ and remark that we may write the equation in the form \begin{align*}\dot u &= -Bv, \\\dot v &= (B-B^{-1})u+B^{-1}f(t,x,u).\end{align*} Define $E=H^{\frac{1}{2}}(\T)\times H^{\frac{1}{2}}(\T)$ as the product of Hilbert spaces where the scalar product of $H^{\frac{1}{2}}(\T)$ is given by $$\langle u_1,u_2\rangle =\frac{1}{2\pi}\int_0^{2\pi}Bu_1(x)u_2(x)dx.$$ If we define the function $$h_t(u,v)=-\frac{1}{2\pi}\int_0^{2\pi}F(t,x,u(x))dx,\qquad F=\int_0^ufdu.$$ we get $$\nabla h_t(u,v)=(B^{-1}f(t,x,u(x)),0).$$ The gradient verifies $\lVert \nabla h_t\rVert_E \leq C_0$. The polynomial growth condition on $f$ guarantees that there exists a $0<\theta<1/2$ such that $\nabla h$ has a $\mathcal C^1$ extension to $H^{\frac{1}{2}-\theta}(\T)\times H^{\frac{1}{2}-\theta}(\T)$. Moreover this implies that $\nabla h$ is locally Lipschitz in $E$ over compact time intervals (see \cite{kuksin1995infinite} for details). A special case where such properties are verified is $f(t,x,u)=\sin u$ which corresponds to the Sine-Gordon equation. In this case $\lVert \nabla h_t\rVert_E \leq 1$. Now putting $A=(B-B^{-1})\times B$ and defining $J:E\rightarrow E$ by $J(u,v)=(-v,u)$ we can write the nonlinear string equation as the semilinear PDE: $$(\dot u,\dot v)=JA(u,v)+J\nabla h_t(u,v).$$ Consider the symplectic Hilbert basis $\{\varphi_j^{\pm}\,|\,j\in \Z\}$ where $$\varphi_j^+=\frac{1}{(j^2+1)^{\frac{1}{4}}}(\varphi_j(x),0),\quad \varphi_j^-=\frac{1}{(j^2+1)^{\frac{1}{4}}}(0,-\varphi_j(x)),$$ with 
\[
\varphi_j(x)=\begin{cases}
\sqrt{2}\sin jx,\quad j>0,\\
\sqrt{2}\cos jx,\quad j\leq 0.
\end{cases}
\]
In this basis we have $(B\times B)\varphi_j^\pm=\sqrt{j^2+1}\varphi_j^\pm$ so if we denote $\lambda_j=\sqrt{j^2+1}$ we get that $$A\varphi_j^+=(\lambda_j-\frac{1}{\lambda_j})\varphi_j^+\quad\text{ and }\quad A\varphi_j^-=\lambda_j\varphi_j^-.$$
Now remark that $JA$ has eigenvalues $\{\pm i \sqrt{\lambda_j^2-1}=\pm ij\}$. If we calculate $e^{tJA}$ we get that its action on each symplectic plane $\varphi_j^+\R\oplus\varphi_j^-\R$ is given by the matrix 
$$\begin{pmatrix}
\cos tj & -\frac{\sqrt{j^2+1}}{j}\sin tj \\
\frac{j}{\sqrt{j^2+1}}\sin tj & \cos tj
\end{pmatrix}$$
which gets closer and closer to a rotation as $j$ goes to infinity. In particular we get a bounded group of symplectic linear maps. We conclude that for all $t\in \R$ the time $t$ map of the flow of the nonlinear string equation $\Phi_t:E\rightarrow E$ is defined on the whole space $E$.

\subsection{Finite dimensional approximation}
In this subsection we will follow \cite{kuksin1995infinite} for the particular case of the nonlinear string equation. We include the proofs for completeness. Recall that the Hilbert basis of $E$ is $\{\varphi_j^{\pm}\,|\, j\in \Z\}$ and denote $E_n$ the vector space generated by $\{\varphi_j^{\pm}\,|\, |j|\leq n\}$. It is a real vector space isomorphic to $\R^{2n+2}$. Let $E^n$ be the Hilbert space with basis $\{\varphi_j^{\pm}\,|\,  |j|> n\}$ so that $E=E_n\oplus E^n$ and write $u=(u_n,u^n)$ for an element $u\in E$. The fact that $J$ and $A$ preserve $E_n$ for all $n$ will allow us to define the finite dimensional approximations just by projecting the vector field. Let $\Pi_n:E\rightarrow E_n$ be the natural projection and consider the Hamiltonian function $$H_n(u)=\frac{1}{2}\langle Au,u\rangle+h_n(u)\quad\text{where}\quad h_n(u):=h_t(\Pi_n(u)).$$ The Hamiltonian equation now becomes $$\dot u=X_{H_n}(u)=JAu+J\nabla h_n(u),$$ where $\nabla h_n(u)=\Pi_n(\nabla h_t(\Pi_n(u)))$. Since $\nabla h_n$ continues to be locally Lipschitz and bounded, $X_{H_n}$ generates a global flow $\Phi_n^t$. This flow can be decomposed as $\Phi_n^t=e^{tJA}\circ V_n^t$ with $V_n^t(u)=(\phi_n^t(u_n),u^n)$. Here $\phi_n^t$ is a finite dimensional Hamiltonian flow on $E_n$ generated by the time dependent function $h_n\circ e^{tJA}$. We remark that this function has a bounded gradient so $\phi_n^t$ verifies Theorem \ref{thm:coisotropic nonsqueezing} for every $t\in \R$. The key point of the approximation is the following lemma which is a slight modification of a lemma in \cite[appendix 2]{kuksin1995infinite}:

\begin{lem}\label{lemma:epsilon} Denote $F_\theta= H^{\frac{1}{2}-\theta}(\T)\times H^{\frac{1}{2}-\theta}(\T)$ and let $K$ be a compact subset of $F_\theta$. Let $g:\R\times F_\theta\rightarrow E$ be a continuous map and fix a $T>0$. Then $$\sup_{(t,u)\in[-T,T]\times K}\lVert g_t(u)-g_t(\Pi_nu)\rVert_{E}$$ converges to zero as $n$ goes to infinity. Moreover, for every $R>0$ there exists a decreasing function $\epsilon_R:\N\rightarrow \R$ such that $\epsilon_R(n)\rightarrow 0$ as $n\rightarrow \infty$ and $$\lVert \nabla h_t(u)-\nabla h_n(u)\rVert_E \leq \epsilon_R(n)$$ for every $u\in B(0,R)$ and $|t|\leq T$.
\end{lem}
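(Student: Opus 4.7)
The plan is to prove the two claims successively, deducing the second from the first together with a compactness argument that trades $E$-regularity for continuity on the weaker space $F_\theta$.

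For the first claim, I would begin by noting that the basis $\{\varphi_j^\pm\}$ diagonalises each projection $\Pi_n$, so $u-\Pi_n u$ is the tail of the Fourier expansion of $u$ and therefore tends to zero in $F_\theta$ for every fixed $u\in F_\theta$. Since $\|\Pi_n\|_{F_\theta\to F_\theta}\leq 1$, the family $\{\Pi_n\}$ is equicontinuous, and an $\varepsilon/3$ argument upgrades pointwise convergence on the compact set $K$ to uniform convergence. Setting $K':=\overline{K\cup\bigcup_n \Pi_n(K)}$, the previous step ensures that $K'$ is compact in $F_\theta$, so the continuous map $g\colon \R\times F_\theta\to E$ is uniformly continuous on $[-T,T]\times K'$. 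Applying this uniform continuity to the pairs $(t,u)$ and $(t,\Pi_n u)$ yields the first claim.

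For the second claim I would decompose
\[
\nabla h_t(u)-\nabla h_n(u) = \bigl[\nabla h_t(u)-\nabla h_t(\Pi_n u)\bigr] + (I-\Pi_n)\,\nabla h_t(\Pi_n u).
\]
The key step is to apply the first claim with $g=\nabla h$, which by the polynomial growth hypothesis on $f$ extends continuously from $\R\times F_\theta$ to $E$. The compact set to use is $K:=\overline{B_E(0,R)}^{F_\theta}$; the Rellich--Kondrachov compact embedding $H^{1/2}\hookrightarrow H^{1/2-\theta}$ guarantees that $K$ is compact in $F_\theta$, and both $u$ and $\Pi_n u$ lie in $K$ for every $u\in B_E(0,R)$, because $\|\Pi_n u\|_E\leq\|u\|_E\leq R$. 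This controls the first bracket. For the second bracket, $\nabla h([-T,T]\times K)$ is compact in $E$, and the equicontinuous family $\{I-\Pi_n\}$ tends to zero strongly on $E$, hence uniformly on this compact set by the same $\varepsilon/3$ argument. Summing the two estimates over $|t|\leq T$ and $u\in B_E(0,R)$ yields a null sequence, which one replaces by its decreasing envelope to obtain $\epsilon_R$.

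The main obstacle is structural: a naive bound via the $E$-Lipschitz constant of $\nabla h$ is useless, since $\|u-\Pi_n u\|_E$ does not tend to zero uniformly on bounded subsets of $E$ (take $u=\varphi_j^+$ with $|j|>n$). Circumventing this requires the stronger hypothesis that $\nabla h$ extends continuously to the weaker space $F_\theta$, together with the compactness of the $E$-ball in $F_\theta$. This trade-off between regularity and compactness is precisely what makes Kuksin's finite-dimensional approximation scheme work.
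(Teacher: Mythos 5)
Your proof is correct and relies on essentially the same ingredients as the paper's: the compact embedding $E\hookrightarrow F_\theta$, the continuity of the extension $\nabla\tilde h\colon\R\times F_\theta\to E$, and the uniform convergence of $\Pi_n\to I$ on compact subsets. The only (minor) variations are that the paper proves the first claim by contradiction via sequential compactness where you argue directly through uniform continuity of $g$ on the compact set $K'=\overline{K\cup\bigcup_n\Pi_n(K)}$, and that in the second claim the paper telescopes by inserting $\Pi_n\nabla h_t(u)$ while you insert $\nabla h_t(\Pi_n u)$; both choices produce the same two estimates (uniform smallness of $I-\Pi_n$ on a precompact subset of $E$, plus an application of the first claim to $g=\nabla\tilde h$ on $\overline{B_E(0,R)}^{F_\theta}$).
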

\begin{proof}
	By contradiction suppose that there is a sequence $\{(s_n,z_n)\}\subset [-T,T]\times K$ such that $\lVert g_{s_n}(z_n)-g_{s_n}(\Pi_nz_n)\rVert_E\geq\delta>0$ for every $n\in \N$. By compactness we may suppose that there is a converging subsequence $(s_{n_k},z_{n_k})\rightarrow (s,z)$. This sequence will also verify $\Pi_{n_k}z_{n_k}\rightarrow z$. We have $$\lVert g_{s_{n_k}}(z_{n_k})-g_{s_{n_k}}(\Pi_{n_k}z_{n_k})\rVert_E\leq \lVert g_{s_{n_k}}(z_{n_k})-g_s(z)\rVert_E+\lVert g_s(z)-g_{s_{n_k}}(\Pi_{n_k}z_{n_k})\rVert_E$$ and the quantity of the rhs converges to zero as $n_k$ goes to infinity by continuity of $g$. In particular, for $n_k$ big enough we get $\lVert g_{s_{n_k}}(z_{n_k})-g_{s_{n_k}}(\Pi_{n_k}z_{n_k})\rVert_E<\delta$, a contradiction.
	
	For the second claim recall that $\nabla h_t$ has an extension to $F_\theta$ for $\theta$ small enough (see \cite{kuksin1995infinite}). Denote by $\nabla \tilde h_t$ the extension and let $i:E \rightarrow F_\theta$ be the compact inclusion so that $\nabla h_t(u)=\nabla \tilde h_t(i(u))$. Recall that $\nabla h_n(u)=\Pi_n \nabla h_t(\Pi_n(u))$. We have $$\lVert \nabla h_t(u)-\nabla h_n(u)\rVert_E \leq\lVert \nabla h_t(u)-\Pi_n\nabla h_t(u)\rVert_E +\lVert \Pi_n \nabla h_t(u)-\Pi_n\nabla h_t(\Pi_nu)\rVert_E $$ $$\leq  \lVert \nabla \tilde h_t(i(u))-\Pi_n\nabla \tilde h_t(i(u))\rVert_E +\lVert \nabla \tilde h_t(i(u))-\nabla \tilde h_t(\Pi_ni(u))\rVert_E.$$ For every $R>0$ the sets $\bigcup_{|t|\leq T}\nabla \tilde h_t(i(B_E(0,R)))$ and $i(B_E(0,R)))$ are precompact in $E=F_0$ and $F_\theta$ respectively, so we may take the $\sup$ in $B_E(0,R)$ and $|t|\leq T$ and apply the first part of the lemma to conclude.
\end{proof}
Now we have all the tools we need for the finite dimensional approximation.

\begin{prop}[\cite{kuksin1995infinite}]\label{prop: approximation}
	 Fix a $t\in \R$. For each $R>0$ and $\epsilon>0$ there exists an $N$ such that if $n\geq N$ then $$\lVert V^t(u)-V^t_n(u)\rVert_E\leq \epsilon$$ for all $u\in B(0,R)$.
\end{prop}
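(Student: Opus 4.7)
The plan is to pass to the interaction picture, write $V^t$ and $V^t_n$ as solutions of Duhamel-type integral equations, and then compare them by a Gronwall argument. Differentiating $\Phi^t(u) = e^{tJA}V^t(u)$ and using $\dot{\Phi}^t = JA\Phi^t + J\nabla h_t(\Phi^t)$ gives
$$\frac{d}{ds}V^s(u) = e^{-sJA}J\nabla h_s(e^{sJA}V^s(u)),$$
and an identical formula holds for $V^s_n$ with $\nabla h_s$ replaced by $\nabla h_n$. Integrating and subtracting yields
$$V^t(u) - V^t_n(u) = \int_0^t e^{-sJA}J\bigl[\nabla h_s(e^{sJA}V^s(u)) - \nabla h_n(e^{sJA}V^s_n(u))\bigr]\,ds.$$

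Next I would establish a uniform a priori bound on the trajectories. Since $M_0=0$, the gradient $\nabla h_s$ is bounded on all of $E$ by $C_0$, and the same holds for $\nabla h_n = \Pi_n\nabla h_s\circ \Pi_n$ since $\Pi_n$ has norm one. Combined with $\|e^{sJA}\|\leq Me^{N|s|}$, a direct integration of the differential equations above gives a radius $R' = R'(R,t)$ such that both $V^s(u)$ and $V^s_n(u)$ (and their images under $e^{sJA}$) lie in $B(0,R')$ for every $u\in B(0,R)$, every $|s|\leq |t|$, and every $n\in\N$. Writing $w = e^{sJA}V^s(u)$ and $w' = e^{sJA}V^s_n(u)$, the integrand is controlled by
$$\|\nabla h_s(w) - \nabla h_n(w)\|_E + \|\nabla h_n(w) - \nabla h_n(w')\|_E.$$
The first term is at most $\epsilon_{R'}(n)$ by Lemma \ref{lemma:epsilon}, while the second is at most $L\|w-w'\|_E$, where $L = L(R')$ is the Lipschitz constant of $\nabla h$ on $B(0,R')$ (which transfers to $\nabla h_n$ because $\Pi_n$ is $1$-Lipschitz). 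Absorbing the exponential factors $\|e^{\pm sJA}\|$ into constants, I obtain
$$\|V^t(u) - V^t_n(u)\|_E \leq K_1\,\epsilon_{R'}(n) + K_2\int_0^{|t|}\|V^s(u)-V^s_n(u)\|_E\,ds,$$
and Gronwall's inequality then gives $\|V^t(u) - V^t_n(u)\|_E \leq K_1\epsilon_{R'}(n)\,e^{K_2|t|}$, which tends to zero uniformly in $u\in B(0,R)$ as $n\to\infty$.

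The main obstacle is purely bookkeeping: one must choose the radius $R'$ so as to enclose all trajectories uniformly in $n$, and this is precisely where the hypothesis $M_0=0$ (uniform boundedness of $\nabla h_t$) is essential. Without it one would need a nonlinear Gronwall-type argument to control the a priori growth of $\|V^s(u)\|$ before being able to apply Lemma \ref{lemma:epsilon}; the same strategy still goes through as long as $\nabla h$ grows no faster than linearly, which explains why Theorem \ref{thm: infinite nonsqueezing} extends to the other sub-quadratic examples listed earlier.
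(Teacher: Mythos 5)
Your proof is correct and follows essentially the same route as the paper's: pass to the interaction picture via Duhamel's formula, split the integrand by the triangle inequality into a term handled by Lemma~\ref{lemma:epsilon} and a term handled by local Lipschitz continuity of the nonlinearity, establish the uniform a priori radius $R'$ from the boundedness $\|\nabla h_t\|_E\leq C_0$, and close with Gronwall. The only cosmetic difference is the order of the two terms in the split (the paper compares $\nabla h_s$ at $\Phi^s$ vs.\ $\Phi^s_n$ and then $\nabla h_s$ vs.\ $\nabla h_n$ at $\Phi^s_n$, while you compare $\nabla h_s$ vs.\ $\nabla h_n$ at $\Phi^s$ and then $\nabla h_n$ at two points), which is immaterial.
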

\begin{proof}
	Duhamel's formula and the fact that that $e^{tJA}$ is a bounded operator give $$\lVert V^t(u)-V^t_n(u)\rVert_E\leq C \int_0^t\lVert \nabla h_s(\Phi^s(u))-\nabla h_n(\Phi^s_n(u))\rVert_Eds\leq$$$$\leq C\int_0^t \lVert \nabla h_s(\Phi^s(u))-\nabla h_s(\Phi^s_n(u))\rVert_Eds+C\int_0^t\lVert \nabla h_s(\Phi_n^s(u))-\nabla h_n(\Phi^s_n(u))\rVert_Eds.$$ If $u\in B_E(0,R)$ and $s\in [0,t]$ then $\lVert \nabla h\rVert_E$ bounded implies that for all $n\in \N$ the element $\Phi_n^s(u)$ wont leave a ball of radius $R'(R,t)$. We can now use Lemma \ref{lemma:epsilon} and the fact that $\nabla h$ is locally Lipschitz to get $$\lVert V^t(u)-V^t_n(u)\rVert_E\leq \tilde C\int_0^t \lVert V^s(u)-V^s_n(u)\rVert_E ds+Ct\epsilon(n).$$ By Gronwall's lemma we conclude that $$\lVert V^t(u)-V^t_n(u)\rVert_E\leq \epsilon(n)C(t)$$ where $C(t)$ depends continuously on $t$. The function $\epsilon(n)$ is decreasing and converges to zero so there exists an $N\in\N$ such that if $n\geq N$ then $\epsilon(n)C(t)\leq \epsilon$ which gives the result.
\end{proof}

\subsection{Coisotropic camel}
We now move towards the proof of Theorem \ref{thm: infinite nonsqueezing}. Recall that to state Theorem \ref{thm:coisotropic nonsqueezing} we had to divide the simplectic phase space into two Lagrangian subspaces that determine the coisotropic subspaces that we work with. In the infinite dimensional case we have $E=E_+\oplus E_-=H^{\frac{1}{2}}\times H^{\frac{1}{2}}$ where $E_+$ (resp. $E_-$) is generated by $\{\varphi^+_j\,|\,j\in \Z\}$ (resp. $\{\varphi^-_j\,|\,j\in\Z\}$). Moreover denote $E_k$ (resp. $E^k_+$ and $E^ k_-$) the Hilbert subspace generated by $\{\varphi^\pm_j\,|\,|j|\leq k\}$ (resp. $\{\varphi^+_j\,|\,|j|\geq k+ 1\}$ and $\{\varphi^-_j\,|\,|j|\geq k+ 1\}$) and $\Pi_k:E\rightarrow E_k$ (resp. $\Pi^k_+$ and $\Pi^ k_-$) the corresponding projection. First, lets state the infinite dimensional version of Theorem \ref{thm:coisotropic nonsqueezing}.

\begin{prop}\label{prop: infinite coisotropic}
	Fix a $k\geq 1$ and let $X$ be a compact set contained in $E_k$. Define $$C=\{u\in E\,|\, \Pi_ku\in X\text{ and }\Pi^k_-u=0\}.$$ Then for every $t\in \R$ we have $$c(X)\leq \gamma(\Pi_k(V^t(C)\cap\{\Pi_+^k=0\})).$$
\end{prop}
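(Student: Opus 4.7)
The plan is to apply Theorem \ref{thm:coisotropic nonsqueezing} to the finite dimensional Hamiltonian flows $\phi_n^t$ and pass to the limit using Proposition \ref{prop: approximation}, so the whole argument is a reduction of the infinite dimensional statement to its finite dimensional counterpart. Let $W := E_k \oplus E_-^k = \{\Pi_+^k = 0\}$.

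\textbf{Step 1 (finite dimensional inequality).} For $n \geq k$, split $E_n \cap E^k = F_n^+ \oplus F_n^-$, where $F_n^\pm := \mathrm{span}\{\varphi_j^\pm : k+1 \leq |j| \leq n\}$ are transverse Lagrangian subspaces of $E_n$; $F_n^+$ is a finite dimensional analogue of $E_+^k$. Recall $V_n^t(u) = (\phi_n^t(u_n), u^n)$, where $\phi_n^t$ is a Hamiltonian flow on the finite dimensional symplectic space $E_n$ whose time dependent generator $h_n \circ e^{sJA}$ has bounded gradient and is therefore sub-quadratic. Applying Theorem \ref{thm:coisotropic nonsqueezing} inside $E_n$, with base $X \subset E_k$, Lagrangian direction $F_n^+$ and transverse coisotropic subspace $E_k \oplus F_n^-$, yields
\[
c(X) \leq \gamma\bigl(\Pi_k(\phi_n^t(X \times F_n^+) \cap (E_k \oplus F_n^-))\bigr).
\]

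\textbf{Step 2 (matching inside $E$).} Since $V_n^t$ acts as the identity on $E^n$, a point $u \in C$ satisfies $V_n^t(u) \in W$ if and only if (a)~its $E_+^k \cap E^n$ component vanishes, forcing $u \in E_n$ and hence $u \in X \times F_n^+$, and (b)~$\phi_n^t(u) \in E_k \oplus F_n^-$. Consequently $V_n^t(C) \cap W = \phi_n^t(X \times F_n^+) \cap (E_k \oplus F_n^-)$, and Step 1 becomes
\[
c(X) \leq \gamma\bigl(\Pi_k(V_n^t(C) \cap W)\bigr) \quad \text{for every } n \geq k.
\]

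\textbf{Step 3 (passage to the limit).} Since $\Pi_k(V^t(C) \cap W)$ lies in the finite dimensional space $E_k$ and is bounded (see below), $\gamma$ on it equals the infimum of $\gamma(U)$ over open neighborhoods $U$; it therefore suffices to prove that for every such $U$, the set $\Pi_k(V_n^t(C) \cap W)$ is contained in $U$ for all $n$ large. The uniform bound $\|\nabla h_s\|_E \leq C_0$ together with Duhamel's formula gives $\|V^t(u) - u\| \leq K(t)$ and $\|V_n^t(u) - u\| \leq K(t)$ for some $K(t)$ independent of $u$ and $n$; since $u' \in C$ implies $\Pi_-^k u' = 0$, every camel initial condition satisfies $\|\Pi_+^k u'\| = \|\Pi_+^k(V_n^t(u') - u')\| \leq K(t)$, so both $\{u' : V_n^t(u') \in W\} \cap C$ and their images are uniformly bounded in $E$. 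Suppose for contradiction that along some subsequence $u'_n \in C$ with $V_n^t(u'_n) \in W$ and $\Pi_k V_n^t(u'_n) \notin U$. Pass to a weakly convergent subsequence $u'_n \rightharpoonup u' \in E$; since $X$ is compact in the finite dimensional $E_k$ and $\Pi_-^k u'_n \equiv 0$, one obtains $u' \in C$. Using the compactness of the nonlinearity (for the nonlinear string equation $\nabla h_t$ factors through the compact operator $B^{-1}$, so $u \mapsto \nabla h_s(u)$ sends bounded weakly convergent sequences to strongly convergent ones), together with Duhamel and a Gronwall iteration, one upgrades this to $V^t(u'_n) \to V^t(u')$ strongly in $E$; combined with $\|V_n^t - V^t\| \to 0$ uniformly on bounded sets from Proposition \ref{prop: approximation}, this gives $V_n^t(u'_n) \to V^t(u')$ strongly. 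Closedness of $W$ yields $V^t(u') \in W$, and then $\Pi_k V^t(u') = \lim \Pi_k V_n^t(u'_n) \in E_k \setminus U$, contradicting $\Pi_k(V^t(C) \cap W) \subset U$.

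The main obstacle is the weak-to-strong convergence $V^t(u'_n) \to V^t(u')$ in Step 3. It rests on the smoothing/compactness property of $\nabla h_t$, exactly the feature that also makes the finite dimensional approximation $V_n^t \to V^t$ of Proposition \ref{prop: approximation} work, and which is at the heart of all Kuksin-type infinite dimensional non-squeezing arguments.
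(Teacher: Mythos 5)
Your Steps 1 and 2 correctly reduce the statement to the finite dimensional Theorem \ref{thm:coisotropic nonsqueezing} and reproduce what the paper calls Lemma \ref{lemma: finite-infinite} part 1; and your framing of the limit passage (work with an arbitrary open neighbourhood $U$ instead of $\epsilon$-neighbourhoods) is fine. The gap is in Step 3, in the assertion that "using the compactness of the nonlinearity \ldots together with Duhamel and a Gronwall iteration, one upgrades this to $V^t(u'_n)\to V^t(u')$ strongly in $E$" from mere weak convergence $u'_n\rightharpoonup u'$. This cannot be correct as stated: by Duhamel's formula $V^t(u)-u$ is the compact part of the map, so if both $V^t(u'_n)\to V^t(u')$ and $V^t(u'_n)-u'_n\to V^t(u')-u'$ hold strongly then $u'_n\to u'$ strongly — yet only weak convergence of $u'_n$ has been established. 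Moreover, a Gronwall iteration on $w_n(s):=\Phi^s(u'_n)-\Phi^s(u')$ is circular, since to show the nonlinear integral term converges one already needs $w_n(\tau)\rightharpoonup 0$; one would have to replace it with an Arzel\`a--Ascoli compactness argument, which you do not set up.

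What is actually needed, and what the paper supplies, is a uniform tail estimate on the camel initial conditions themselves: since $\Pi_+^k V_n^t u'_n=0$ and $V_N^t$ acts as the identity on $E^N$, for $N\geq k$ one has
$$\lVert \Pi^N u'_n\rVert=\lVert\Pi_+^N V_N^t u'_n\rVert=\lVert\Pi_+^N V_N^t u'_n-\Pi_+^N V_n^t u'_n\rVert\leq\lVert V_N^t u'_n - V_n^t u'_n\rVert,$$
and by Proposition \ref{prop: approximation} the right-hand side is $\leq\delta$ uniformly for $n,N\geq N_0(\delta)$. Combined with the weak convergence (which gives strong convergence of the finite dimensional projections $\Pi_N u'_n$), this shows $\{u'_n\}$ is Cauchy in $E$, hence strongly convergent. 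Only then does ordinary (strong) continuity of $V^t$, together with Proposition \ref{prop: approximation}, yield $V_n^t(u'_n)\to V^t(u')$ strongly and the contradiction. The weak-to-strong continuity of $\nabla h_t$ is indeed the engine behind Proposition \ref{prop: approximation}, but invoking it directly on the initial conditions does not substitute for this tail argument, which is the key mechanism that upgrades weak to strong convergence here.
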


This is not a statement about the actual flow of the nonlinear string equation. Nevertheless using the fact that $e^{tJA}$ restricts to a symplectic isomorphism on each $E_n$ we get Theorem \ref{thm: infinite nonsqueezing}:
\begin{proof}[Proof of Theorem \ref{thm: infinite nonsqueezing}]
	We always have the inclusion $\Pi_k(V^t(C)\cap\{\Pi_+^k=0\})\subseteq \Pi_kV^t(C)$ so by Proposition \ref{prop: infinite coisotropic} and monotonicity of the symplectic capacity $\gamma$ we have $$c(X)\leq \gamma(\Pi_k(V^t(C)\cap\{\Pi_+^k=0\}))\leq\gamma(\Pi_kV^t(C)).$$ The linear operator $e^{tJA}$ restricts to a symplectic isomorphism on each $E_n$ which commutes with $\Pi_k$ and the capacity $\gamma$ is invariant under symplectic transformations so $$\gamma( \Pi_kV^t(C))=\gamma( e^{-tJA}\Pi_ke^{tJA}V^t(C))=\gamma(\Pi_k\Phi^t(C)).$$ which gives the desired result.
\end{proof}

The proof of proposition \ref{prop: infinite coisotropic} relies on the finite dimensional result and it is the finite dimensional approximation of the flow that allows us to go from finite to infinite dimensions. For these reasons we start with the following lemma:

\begin{lem}\label{lemma: finite-infinite}
	\begin{enumerate}
		\item Fix a $k\geq 1$ and let $X$ be a compact set contained in $E_k$. Then for every $t\in \R$ and $n>k$ we have $c(X)\leq \gamma(\Pi_k(V_n^t(C)\cap\{\Pi_+^k=0\})).$
		\item The set $\cup_n\{u\in C\,|\,\Pi_+^kV_n^tu=0\}\subseteq E$ is bounded by a constant $R(t)$.
		\item The set $\{u\in C\,|\,\Pi_+^kV^tu=0\}$ is compact and so is $V^t(C)\cap\{\Pi_+^k=0\}$.
		
	\end{enumerate}
\end{lem}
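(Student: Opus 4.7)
For (1) I would apply Theorem \ref{thm:coisotropic nonsqueezing} to the finite-dimensional Hamiltonian diffeomorphism $\phi_n^t$ on $E_n$. Decomposing $u \in C$ as $u = u_k + (u_+^k)_n + (u_+^k)^n$ using $E = E_k \oplus (E_+^k \cap E_n) \oplus (E_+^k \cap E^n)$ (the $E_-^k$ component vanishes by definition of $C$), the formula $V_n^t u = \phi_n^t(u_n) + u^n$ shows that the condition $\Pi_+^k V_n^t u = 0$ is equivalent to $(u_+^k)^n = 0$ (i.e.\ $u \in E_n$) together with $\phi_n^t(u) \in W := E_k \oplus (E_-^k \cap E_n)$. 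Hence $\Pi_k\bigl(V_n^t(C) \cap \{\Pi_+^k = 0\}\bigr) = \Pi_k\bigl(\phi_n^t(C \cap E_n) \cap W\bigr)$. Under the identifications $E_n \simeq \C^{2n+1}$, $C \cap E_n \simeq X \times \R^{2(n-k)}$, $W \simeq \C^{2k+1} \times i\R^{2(n-k)}$, and since $\phi_n^t$ is generated by $h_n \circ e^{sJA}$ whose gradient is uniformly bounded (hence sub-quadratic), the finite-dimensional coisotropic camel theorem yields the desired inequality.

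For (2) the uniform bound $\|\nabla(h_n \circ e^{sJA})\|_E \leq MC_0$ (with $M$ bounding the group $\{e^{sJA}\}$ and $C_0$ bounding $\|\nabla h_s\|_E$) yields $\|\phi_n^t(v) - v\|_E \leq MC_0|t|$ for every $v \in E_n$, uniformly in $n$. If $u \in C$ satisfies $\Pi_+^k V_n^t u = 0$ then the analysis of (1) gives $u \in E_n$ and $\phi_n^t(u) \in W$. Writing $\pi_+^{k,n}$ for the projection onto $E_+^k \cap E_n$, one has $\pi_+^{k,n}\phi_n^t(u) = 0$ while $\pi_+^{k,n} u = (u_+^k)_n$, so $\|(u_+^k)_n\|_E \leq \|\phi_n^t(u) - u\|_E \leq MC_0|t|$; combined with $\|u_k\|_E \leq r$ (a bound on $X$) this gives $\|u\|_E \leq r + MC_0|t| =: R(t)$ independent of $n$.

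The main obstacle is (3), where compactness (not merely closed-boundedness) in the infinite-dimensional space $E$ must be extracted. Closedness of $S := \{u \in C \mid \Pi_+^k V^t u = 0\}$ is immediate from continuity of $V^t$ and $\Pi_+^k$, and boundedness follows from the same Duhamel estimate as in (2) applied to $V^t - \id$. For compactness I would rewrite the defining relation using Duhamel's formula as
\[
u_+^k = -\Pi_+^k \int_0^t e^{-sJA}\,J\,\nabla h_s(\Phi^s u)\,ds \quad \text{for } u \in S,
\]
and reduce to showing that the map $F(u) := \int_0^t e^{-sJA}\,J\,\nabla h_s(\Phi^s u)\,ds$ sends bounded sets in $E$ to relatively compact sets. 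This follows because $\nabla h_s$ extends to a continuous map $F_\theta \to E$ while the inclusion $E \hookrightarrow F_\theta$ is compact, so the set $\{e^{-sJA} J \nabla h_s(\Phi^s u) \mid s \in [0,t],\, \|u\|_E \leq R(t)\}$ is relatively compact in $E$, and $F$ takes values in its (compact) closed convex hull. Given a sequence $\{u_m\} \subset S$, the components $\Pi_k u_m \in X$ have a convergent subsequence by compactness of $X$, and a further extraction makes $\Pi_+^k u_m = -\Pi_+^k F(u_m)$ convergent by compactness of $F$; since $\Pi_-^k u_m = 0$, this yields convergence in $E$. Continuity of $V^t$ then transfers compactness to $V^t(S) = V^t(C) \cap \{\Pi_+^k = 0\}$.
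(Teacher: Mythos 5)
Your treatment of parts (1) and (2) is essentially the paper's argument: the paper reduces (1) to Theorem \ref{thm:coisotropic nonsqueezing} for $\phi_n^t$ on $E_n$ (generated by $h_n\circ e^{tJA}$, which has uniformly bounded gradient) via the decomposition $V_n^t u=(\phi_n^t(u_n),u^n)$, only calling the identification of $V_n^t(C)\cap\{\Pi_+^k=0\}$ with a finite-dimensional camel configuration ``an easy computation'', which you spell out correctly; and (2) is in both cases the Duhamel/displacement bound $\lVert V_n^t u-u\rVert\leq c(t)$ independent of $n$, applied to $\Pi_+^k u=\Pi_+^k(u-V_n^tu)$. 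Where you genuinely diverge is (3). The paper proves precompactness of $\{u\in C\mid \Pi_+^kV^tu=0\}$ by showing the tails are uniformly small: for such $u$ and $N\geq k$, $\lVert\Pi^Nu\rVert=\lVert\Pi_+^NV_N^tu-\Pi_+^NV^tu\rVert\leq\lVert V_N^tu-V^tu\rVert$, which is small for large $N$ by the approximation result (Proposition \ref{prop: approximation}); a weakly convergent subsequence is then upgraded to a Cauchy sequence by splitting $z_p-z_q$ into a finite-dimensional part and tails. You instead exploit the smoothing property of the nonlinearity directly: from $V^t=e^{-tJA}\Phi^t$ and Duhamel, $\Pi_+^ku=-\Pi_+^kF(u)$ with $F(u)=\int_0^te^{-sJA}J\nabla h_s(\Phi^su)\,ds$, and $F$ maps bounded sets to relatively compact sets because $\nabla h$ factors through the compact embedding $E\hookrightarrow F_\theta$ (with $\nabla\tilde h$ jointly continuous, as used in Lemma \ref{lemma:epsilon}) and the integral lies in a scaled closed convex hull of a compact set, which is compact by Mazur's theorem; compactness of the $\Pi_+^k$-component then combines with compactness of $X$ and $\Pi_-^k u=0$. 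Both routes are valid here. Your argument for (3) is self-contained and avoids Proposition \ref{prop: approximation}, at the cost of invoking the compact extension of $\nabla h$ and Mazur; the paper's equi-small-tails argument has the advantage that it is exactly the template reused in the proof of Proposition \ref{prop: infinite coisotropic}, where the sequence involves the varying maps $V_n^t$ and the approximation result is genuinely needed, so your mechanism would not replace it there. Two cosmetic points: state explicitly (as you do) that closedness of the set comes from continuity of $V^t$, $\Pi_\pm^k$ and compactness of $X$, since the paper leaves this implicit; and in the convex-hull step the integral lies in $|t|\cdot\overline{\mathrm{conv}}$ of the relatively compact set, which is the compact set you want.
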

\begin{proof}
	Recall that $V_n^tu=(\phi_n^t(u_n),u^n)$ where $\phi_n^t$ is a finite dimensional flow generated by a sub-quadratic Hamiltonian function so it verifies Theorem \ref{thm:coisotropic nonsqueezing}. An easy computation shows that $V_n^t$ verifies the statement if and only if $\phi_n^t$ verifies Theorem \ref{thm:coisotropic nonsqueezing} on $E_n$.
	
	For the second claim let $u\in E$ and decompose its norm as $\lVert u\rVert\leq\lVert \Pi_ku\rVert+\lVert \Pi_+^ku\rVert+\lVert \Pi_-^ku\rVert$. If $u\in C$ then by definition $\Pi_ku$ belongs to $X$ which is compact contained in a ball of a certain radius $r$ and $\Pi_-^ku=0$ so $\lVert u\rVert\leq r+\lVert \Pi_+^ku\rVert$. It is then enough to show that $\Pi_+^kV_n^tu=0$ implies $\lVert\Pi_+^ku\rVert\leq c(t)$. Duhamel's formula and the fact that and $\sup_{(t,u)\in[0,t]\times E}\lVert\nabla h_n(u)\rVert\leq\sup_{(t,u)\in[0,t]\times E}\lVert\nabla h(u)\rVert$ is bounded imply that $\lVert V_n^tu-u\rVert\leq c(t)$ where $c(t)$ does not depend on $n$. We get that $\lVert\Pi_+^ku\rVert=\lVert \Pi^k_+V_n^tu-\Pi^k_+u\rVert\leq \lVert V_n^tu-u\rVert\leq c(t)$ and the result follows with $R(t)=r+c(t)$. 
	
	For the third claim we start by using the same argument as before to prove that $\{u\in C\,|\,\Pi_+^kV^tu=0\}$ is bounded. Now let $\{z_n\}\subset E$ be a sequence such that $$\Pi_kz_n\in X,\quad \Pi_-^kz_n=0,\quad\text{ and }\quad\Pi_+^kV^tz_n=0\quad\text{ for all }n\in \N.$$ We claim that $\{z_n\}$ has a convergent subsequence. First remark that, by the decomposition of $V_N$ in $E_N\oplus E^N$, for every $u \in E$ and $N\in \N$ we have $\Pi^NV_N^tu=\Pi^Nu$. Moreover, by definition of $z_n$, if $N\geq k$ then $\Pi_-^Nz_n=0$ and $\Pi_+^NV^tz_n=0$. For $N\geq k$ we have $$\lVert \Pi^Nz_n\rVert=\lVert \Pi^N_+z_n\rVert=\lVert\Pi_+^NV_N^tz_n\rVert=\lVert\Pi_+^NV_N^tz_n-\Pi_+^NV^tz_n\rVert\leq\lVert V_N^tz_n-V^tz_n\rVert.$$ Now $\{z_n\}_n$ is a bounded sequence so we can apply proposition \ref{prop: approximation} and for every $\epsilon>0$ there exists a $N_0(\epsilon)\in \N$ such that if $N\geq N_0$ then $\lVert V_N^tz_n-V^tz_n\rVert\leq \epsilon$. By the previous inequalities this implies that for $N\geq N_0$ we have $\lVert \Pi^Nz_n\rVert\leq\epsilon$. On the other hand, $\{z_n\}_n$ bounded implies that it has a weakly converging subsequence (still denoted by $\{z_n\}$ for simplicity) that converges when projected onto any finite dimensional subspace $E_N$. We conclude that for any $\delta>0$, with $\epsilon=\delta/3$ and $N\geq N_0(\epsilon)$, if $p,q\in \N$ are big enough we have $$\lVert z_p-z_q\rVert\leq \lVert \Pi_Nz_p-\Pi_Nz_q\rVert+\lVert \Pi^Nz_p\rVert+\lVert\Pi^Nz_q\rVert<\delta$$ which implies that $z_n$ is a Cauchy sequence.
	
\end{proof}

\begin{proof}[Proof of Proposition \ref{prop: infinite coisotropic}]
	Let $\mathcal V_\epsilon$ be the open $\epsilon$ neighbourhood of $\Pi_k(V^t(C)\cap\{\Pi_+^k=0\})$. We will show that for each $\epsilon>0$ there exists an $n\in\N$ such that $\Pi_k(V_n^t(C)\cap\{\Pi_+^k=0\})\subseteq \mathcal V_\epsilon$. Once this is proven, Lemma \ref{lemma: finite-infinite} part 1 and monotonicity of the capacity $\gamma$ imply that $c(X)\leq \gamma(\mathcal V_\epsilon)$ for every $\epsilon>0$ so $c(X)\leq\lim_{\epsilon\rightarrow 0}\gamma(\mathcal{V}_\epsilon)$. We then use that $\Pi_k(V^t(C)\cap\{\Pi_+^k=0\})$ is compact by Lemma \ref{lemma: finite-infinite} part 3 to conclude that $\lim_{\epsilon\rightarrow 0}\gamma(\mathcal{V}_\epsilon)=\gamma(\Pi_k(V^t(C)\cap\{\Pi_+^k=0\}))$ which is the desired result.
	
	The proof is by contradiction. Suppose that there exist an $\epsilon_0>0$ and a sequence $\{z_n\}\subset E$ such that for all $n\in \N$ $$\Pi_kz_n\in X,\quad \Pi_-^kz_n=0,\quad\Pi_+^kV_n^tz_n=0\quad\text{ and }\quad d(\Pi_kV_n^tz_n,\mathcal V_0)\geq \epsilon_0.$$ We claim that $\{z_n\}$ has a convergent subsequence. We use the same argument as in Lemma \ref{lemma: finite-infinite} part 3. For $N\geq k$ we have $$\lVert \Pi^Nz_n\rVert=\lVert \Pi^N_+z_n\rVert=\lVert\Pi_+^NV_N^tz_n\rVert=\lVert\Pi_+^NV_N^tz_n-\Pi_+^NV_n^tz_n\rVert\leq\lVert V_N^tz_n-V_n^tz_n\rVert.$$ By Lemma \ref{lemma: finite-infinite} part 2 we know that ${z_n}$ is a bounded sequence so we can apply Proposition \ref{prop: approximation} and for every $\delta>0$ there exists a $N_0(\delta)\in \N$ such that if $n,N\geq N_0$ then $\lVert V_N^tz_n-V_n^tz_n\rVert\leq \delta$. By the previous inequalities this implies that for $n,N\geq N_0$ we have $\lVert \Pi^Nz_n\rVert\leq\delta$. On the other hand, $\{z_n\}$ bounded implies that it has a weakly converging subsequence (still denoted by $\{z_n\}$ for simplicity) that converges when projected onto any finite dimensional subspace $E_N$. We conclude that for any $\delta>0$, with $\epsilon=\delta/3$ and $N\geq N_0(\epsilon)$, if $p,q\geq N_0$ are big enough we have $$\lVert z_p-z_q\rVert\leq \lVert \Pi_Nz_p-\Pi_Nz_q\rVert+\lVert \Pi^Nz_p\rVert+\lVert\Pi^Nz_q\rVert<\delta$$ which implies that $z_n$ is a Cauchy sequence. Denote $z$ its limit in $E$. The set $X$ is closed so $\Pi_kz\in X$ and $\Pi^k_-$ is continuous so $\Pi^k_-z=0$. This means that $z$ is an element of $C$. Moreover remark that $$\lVert V^tz-V^t_nz_n\rVert\leq \lVert V^tz-V^tz_n\rVert+\lVert V^tz_n-V^t_nz_n\rVert.$$  so by continuity of $V^t$ and again proposition \ref{prop: approximation} we get that $V^t_nz_n$ converges to $V^tz$ in $E$. Using the hypothesis $\Pi_+^kV_n^tz_n=0$ we find that $\Pi_+^kV^tz=0$ which allows us to conclude that $\Pi_kV^tz$ belongs to $\mathcal V_0$. This contradicts the fact that $d(\Pi_kV_n^tz_n,\mathcal V_0)\geq \epsilon_0>0$ for all $n\in \N$ achieving the proof of the theorem.
\end{proof}

\bigskip
\footnotesize
\noindent\textit{Acknowledgments.}
I am thankful to Emmanuel Opshtein for his useful comments concerning the proof of theorem \ref{thm:coisotropic nonsqueezing}. I also thank Claude Viterbo for his encouragement and for many helpful conversations. This work is part of my PhD funded by PSL Research University.

\normalsize
\appendix
\section{Some calculations of symplectic capacities}\label{appendix: calculations}
By definition we know that for every symplectic capacity $c$ we have $$c(B^{2n}_r)=\pi r^2=c(B_r^2\times \C^{n-1})$$
The reader interested in the proof of this equality for the two different symplectic capacities $c$ and $\gamma$ that were defined in \cite{viterbo1992symplectic} may look, for example, at \cite{aebischer1994symplectic}. We are interested in the value of Viterbo's capacities on coisotropic spaces $\C^k\times \R^{n-k}\subseteq \C^k\times \C^{n-k}$ with $n\neq k$. Recall that $c$ and $\gamma$ are first defined on open bounded sets $U$, then if $V$ is open and unbounded subsets then $c(V)$ is defined as the supremum of the values of $c(U)$ for all open bounded $U$ contained in $V$ and finally if $X$ is an arbitrary domain of $\C^n$ then $c(X)$ is the infimum of all the values $c(V)$ for all open $V$ containing $X$.

\begin{prop}\label{prop: coisotropic capacity} Consider the coisotropic subspace $\C^k\times \R^{n-k}\subseteq \C^k\times \C^{n-k}$ with $0\leq k<n$. We have $$c(\C^k\times \R^{n-k})=0=\gamma(\C^k\times \R^{n-k}).$$
\end{prop}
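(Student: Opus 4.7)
I would reduce the statement to the extremal case $k=n-1$ via monotonicity, and then exhibit explicit thin neighborhoods of $\C^{n-1}\times\R$ whose bounded subsets embed into symplectic cylinders of arbitrarily small capacity.

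\textit{Reduction.} Writing $\C^n=\C^{n-1}\times\C$, one has $\C^k\times\R^{n-k}\subseteq \C^{n-1}\times\R$ whenever $k\leq n-1$, since the first $n-1$ complex factors of $\C^k\times\R^{n-k}$ lie in $\C^{n-1}$ and the last factor is contained in $\R$. By monotonicity of $c$ and $\gamma$ on arbitrary subsets (valid through the $\sup$/$\inf$ definitions recalled in the introduction), it would suffice to prove $c(\C^{n-1}\times\R)=\gamma(\C^{n-1}\times\R)=0$.

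\textit{Main construction.} Given $\delta>0$, I would choose a smooth even function $f\colon\R\to\R_{>0}$, non-increasing on $[0,\infty)$, with $\int_\R f<\delta$ (for instance $f(q)=\delta/(4(1+q^2))$). Setting $A_f:=\{(q,p)\in\R^2:|p|<f(q)\}$, the region $A_f$ is star-shaped about the origin, hence simply connected, and $\area(A_f)=2\int f<2\delta$. The open set $V_\delta:=\C^{n-1}\times A_f$ is a neighborhood of $\C^{n-1}\times\R$. By the definition of capacity on unbounded open sets, $c(V_\delta)$ and $\gamma(V_\delta)$ are the suprema of $c(U),\gamma(U)$ over bounded open $U\subseteq V_\delta$, and any such $U$ fits inside $\C^{n-1}\times A_{f,R}$ for some $R$, where $A_{f,R}:=A_f\cap\{|q|<R\}$ is a bounded simply connected planar region of area $<2\delta$. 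After slightly smoothing the corners at $q=\pm R$ into some $\tilde A_{f,R}\supset A_{f,R}$ of area $<2\delta+\epsilon$, the two-dimensional Moser / Greene--Shiohama theorem would provide a compactly supported symplectomorphism $\Phi_{2D}$ of $\R^2$ sending $\tilde A_{f,R}$ inside $B^2_r$ with $\pi r^2=2\delta+\epsilon$. Lifting by $\Phi:=\id_{\C^{n-1}}\times\Phi_{2D}$ gives a symplectomorphism of $\C^n$ with $\Phi(U)\subseteq\C^{n-1}\times B^2_r$, so that symplectic invariance and monotonicity yield
\[
c(U)=c(\Phi(U))\leq c(\C^{n-1}\times B^2_r)=\pi r^2<2\delta+\epsilon,
\]
and identically for $\gamma$ via $\gamma(\C^{n-1}\times B^2_r)=\pi r^2$ from the introduction. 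Passing to the supremum over $U$, then letting $\epsilon\to 0$ and finally $\delta\to 0$, would give $c(\C^{n-1}\times\R)=\gamma(\C^{n-1}\times\R)=0$, which combined with the reduction is the desired statement.

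\textit{Main obstacle.} The delicate part is producing an \emph{ambient} symplectomorphism $\Phi_{2D}$ of $\R^2$ rather than just a symplectic embedding of $\tilde A_{f,R}$ into a disk; this is exactly what simple-connectedness of $A_f$ buys us, since Moser's trick then extends from $\tilde A_{f,R}$ to an area-preserving diffeomorphism of the whole plane, supported in a large ball. Everything else in the argument is bookkeeping with the monotonicity, invariance, and normalization axioms of $c$ and $\gamma$ recalled in the introduction.
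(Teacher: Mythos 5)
Your proof is correct, but it takes a genuinely different route from the paper's. The paper uses a two-step structural argument: first it observes that $\lambda\cdot(\C^k\times\R^{n-k})=\C^k\times\R^{n-k}$ for all $\lambda\neq 0$, so by the conformality axiom the capacity must be either $0$ or $+\infty$; then it rules out $+\infty$ by a displacement-energy bound --- it takes the thin neighborhood $V=\{\lvert p_n\rvert<f'(q_n)\}$ (with $f$ bounded, $f'>0$) and displaces it by the flow of $H=-2f(q_n)$, invoking Viterbo's inequality $\gamma(\psi_1)\leq\lVert H\rVert_{\mathcal C^0}$ to get $\gamma(V)\leq 2$, whence $c\leq\gamma=0$. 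Your argument instead bypasses the homogeneity trick entirely: you reduce via monotonicity to $\C^{n-1}\times\R$, take neighborhoods $\C^{n-1}\times A_f$ with arbitrarily small planar cross-section, and push bounded pieces symplectically into thin cylinders $\C^{n-1}\times B^2_r$ using the two-dimensional Moser/Dacorogna--Moser theorem plus the normalization axiom $c(\C^{n-1}\times B^2_r)=\gamma(\C^{n-1}\times B^2_r)=\pi r^2$, then let $\delta\to 0$. Both proofs are sound. What the paper's approach buys you is that no uniformization-type result is needed --- the displacement Hamiltonian is explicit and the bound is immediate --- and the homogeneity observation neatly sidesteps the need to control the actual numerical value; what yours buys is that it estimates $c$ and $\gamma$ simultaneously and directly (rather than via $c\leq\gamma$), and it does not rely on the group structure or on the inequality $\gamma(\psi_1)\leq\lVert H\rVert_{\mathcal C^0}$, only on the capacity axioms and the standard fact that compactly supported area-preserving diffeomorphisms of $\R^2$ act transitively on smooth simply connected domains of a given area. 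One small caveat you should make explicit if you write this up: you need $\Phi_{2D}$ to be a genuine compactly supported \emph{ambient} symplectomorphism of $\R^2$ (not merely an embedding of $\tilde A_{f,R}$ into a disk), because the invariance of $c$ and $\gamma$ recalled in the paper is stated for $\Phi\in\mathrm{Symp}(\R^{2n})$; as you note, simple-connectedness of $\tilde A_{f,R}$ makes this available, but it deserves a precise citation.
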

\begin{proof}
	First remark that for every $\lambda\neq 0$ we have $\lambda\cdot (\C^k\times \R^{n-k})=\C^k\times \R^{n-k}$ so by homogeneity of symplectic capacities we deduce that any capacity is either $0$ or $+\infty$ on coisotropic subspaces. Since we have the inequality $c(\C^k\times \R^{n-k})\leq \gamma(\C^k\times \R^{n-k})$ it is enough to prove that $\gamma(\C^k\times \R^{n-k})<+\infty$. By definition $$\gamma(\C^k\times \R^{n-k})=\inf\{\gamma(V)\,|\, V\quad \text{is open and} \quad \C^k\times \R^{n-k}\subseteq V\},$$ so it is enough to find an open set $V$ containing $\C^k\times \R^{n-k}$ with finite $\gamma$ value. Recall moreover that for a bounded open set we have $$\gamma(U)=\inf\{\gamma(\psi),\; \psi\in \ham^c(\C^n),\; \psi(U)\cap U=\emptyset\}$$ In order to find the open set with finite displacement energy we will use \cite[Proposition 4.14]{viterbo1992symplectic} which states the following: for a $\mathcal C^2$ compactly supported Hamiltonian $H:[0,1]\times \C^n\rightarrow \R$ that generates a flow $\psi_1$ we have $\gamma(\psi_1)\leq \lVert H\rVert_{\mathcal C^0}$. 
	
	Find a smooth function $f:\R\rightarrow \R$ with values on $]0,1[$ and $f'(s)>0$ for every $s\in \R$. Define the open set $$V=\{(q_1,p_1,\dots,q_n,p_n)\in \C^n\quad\text{such that}\quad |p_n|<f'(q_n)\}.$$ By hypothesis $k<n$ so $\C^k\times\R^{n-k}\subseteq V$. We claim that $\gamma(V)<+\infty$. For this we will consider the bounded Hamiltonian $H(q,p)=-2f(q_n)$ which generates the flow $$\psi_t(q,p)=(q_1,p_1,\dots,q_n,p_n+t2f'(q_n)).$$ If $(q,p)\in V$ then $$|p_n+2f'(q_n)|\geq 2f'(q_n)-|p_n|>f'(q_n)$$ which implies that $\psi_1(V)\cap V=\emptyset$. Let $U$ be an open bounded set contained in $V$, we have $\psi_1(U)\cap U=\emptyset$. Find a compactly supported smooth function $\chi:\C^n\rightarrow \R$ with values on $[0,1]$ and constant equal to $1$ on a neighbourhood of $\bigcup_{t\in [0,1]}\psi_t(U)$. Then $\chi H$ verifies $\lVert \chi H\rVert_{\mathcal C^{0}}\leq \lVert H\rVert_{\mathcal C^0}$ and by construction its flow still displaces the open set $U$. We conclude by \cite[Proposition 4.14]{viterbo1992symplectic} that $\gamma(U)\leq \lVert H\rVert_{\mathcal C^0}$. Since the bound does not depend on $U$ this implies that $\gamma(V)\leq \lVert H\rVert_{\mathcal C^0}$ which finally gives $$\gamma(\C^k\times \R^{n-k})\leq \lVert H\rVert_{\mathcal C^0}<+\infty$$ concluding the proof.
\end{proof}

\section{A Hamiltonian subgroup of the group of symplectic diffeomorphisms}\label{appendix: subgroup}
In this section we exhibit a subgroup of $Sympl(\C^n)$ which is strictly bigger than  the group of compactly supported Hamiltonian diffeomorphisms and whose elements are generated by sub-quadratic functions.

\begin{prop}Denote $Ham^{dL}(\C^n)$ the set of Hamiltonian diffeomorphisms $\varphi_t^H$ such that $H_t$, $\varphi_t^H$ and $(\varphi_t^H)^{-1}$ are all Lipschitz in space over compact time intervals. Then $Ham^{dL}(\C^n)$ is a subgroup of $Sympl(\C^n)$. Moreover $Ham^{dL}(\C^n)$ is strictly bigger than the group of compactly supported Hamiltonian diffeomorphisms.
\end{prop}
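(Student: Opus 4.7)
The plan is to verify the two assertions in turn: first that $Ham^{dL}(\C^n)$ is closed under composition and inversion (hence, together with $\id\in Ham^{dL}(\C^n)$, a subgroup), and then to exhibit a concrete element that is not compactly supported.

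For the subgroup property I would rely on the two classical composition/inversion formulas for Hamiltonian flows. If $\varphi^H_t,\varphi^K_t\in Ham^{dL}(\C^n)$, then the composition $\varphi^H_t\circ\varphi^K_t$ is the flow of the Hamiltonian
\[
(H\#K)_t \;=\; H_t + K_t\circ(\varphi^H_t)^{-1},
\]
while the inverse $(\varphi^H_t)^{-1}$ is the flow of $\bar H_t:=-H_t\circ\varphi^H_t$. Since a composition of two maps that are Lipschitz in space is again Lipschitz (with constant bounded by the product of the two Lipschitz constants), each of these new Hamiltonians is Lipschitz in the space variable on any compact time interval: the hypothesis gives a uniform Lipschitz constant for $H_t$, $K_t$, $\varphi^H_t$ and $(\varphi^H_t)^{-1}$ on $[-T,T]$, and multiplying finitely many of these constants gives a uniform bound. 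The same reasoning applied to the diffeomorphisms themselves shows that $\varphi^H_t\circ\varphi^K_t$ and its inverse $(\varphi^K_t)^{-1}\circ(\varphi^H_t)^{-1}$ are Lipschitz in space on $[-T,T]$. This closes $Ham^{dL}(\C^n)$ under composition and inversion.

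For strict containment I would exhibit one element. Consider the time-independent Hamiltonian $H(q_1,p_1,\dots,q_n,p_n)=q_1$. It is smooth and globally $1$-Lipschitz in space. Its Hamiltonian vector field is $X_H=-\partial_{p_1}$, whose flow is the translation
\[
\varphi_t(q_1,p_1,\dots,q_n,p_n) \;=\; (q_1,p_1-t,q_2,p_2,\dots,q_n,p_n),
\]
an isometry of $\C^n$ whose inverse $\varphi_{-t}$ is also an isometry. In particular $H$, $\varphi_t$ and $\varphi_t^{-1}$ are all $1$-Lipschitz in space for every $t$, so $\varphi_t\in Ham^{dL}(\C^n)$; but $\varphi_t$ is manifestly not compactly supported for $t\neq 0$, establishing the strict inclusion.

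The most delicate point, though still routine, is the first one: one must check that the Lipschitz estimates on $H_t\#K_t$ and $\bar H_t$ remain uniform on compact time intervals. This is where the hypothesis that the Lipschitz constants of $\varphi^H_t$ and $(\varphi^H_t)^{-1}$ be controlled on compact time intervals (not merely pointwise in $t$) is essential, so that compactness in $t$ turns finitely many time-dependent Lipschitz bounds into a single Lipschitz bound valid throughout the interval.
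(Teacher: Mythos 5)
Your proof is correct, and the subgroup part coincides with the paper's: the same composition and inversion formulas $\varphi_t^H\circ\varphi_t^K=\varphi_t^{H\#K}$, $(\varphi_t^H)^{-1}=\varphi_t^{\bar H}$ with $H\#K_t=H_t+K_t\circ(\varphi_t^H)^{-1}$ and $\bar H_t=-H_t\circ\varphi_t^H$, together with the observation that compositions of maps Lipschitz uniformly on compact time intervals are again Lipschitz (the paper leaves this as an "easy exercise"; you spell it out, which is fine). Where you genuinely diverge is the strict inclusion: the paper takes a general Lipschitz autonomous Hamiltonian with Lipschitz gradient and invokes Gronwall's lemma to show that $\varphi_t^H$ and $(\varphi_t^H)^{-1}=\varphi_{-t}^H$ are Lipschitz, whereas you bypass Gronwall entirely by exhibiting the single explicit Hamiltonian $H=q_1$, whose flow is a translation in $p_1$ and hence trivially Lipschitz with Lipschitz inverse and visibly non-compactly supported. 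Your route is more elementary and self-contained; the paper's buys a whole class of non-compactly-supported elements (any Lipschitz Hamiltonian with Lipschitz gradient), which better motivates the use of $Ham^{dL}(\C^n)$ as an example of the subgroup $G$ in the introduction. Both arguments, like the paper's, leave implicit the routine verification that compactly supported Hamiltonian diffeomorphisms themselves lie in $Ham^{dL}(\C^n)$, which is needed for "strictly bigger" to read as a strict inclusion of groups.
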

\begin{rem}
	The superscript $dL$ on $Ham^{dL}(\C^n)$ stands for double Lipschitz condition.
\end{rem}
\begin{proof}
	First recall the following formulas: $$\varphi_t^H\circ\varphi_t^K=\varphi_t^{H\#K}\quad\text{and}\quad (\varphi_t^H)^{-1}=\varphi_t^{\bar H},$$ where $$H\#K(t,z)=H(t,z)+K(t,(\varphi_t^H)^{-1}(z)),$$ $$\bar H(t,z)=-H(t,\varphi_t^H(z)).$$ The identity is clearly in $Ham^{dL}(\C^n)$ and it is an easy exercise to use these formulas to prove that $Ham^{dL}(\C^n)$ has a group structure. For the second statement, consider a Lipschitz autonomous Hamiltonian $H$ with Lipschitz gradient and use Gronwall's lemma to prove that $\varphi^H_t$ (and therefore $(\varphi_t^H)^{-1}=\varphi_{-t}^H$) is Lipschitz. %Then using the fact that $$\varphi^H_t(z)=z+\int_0^tJ\nabla H(\varphi_s^H(z))ds$$ we get that $$\lVert\varphi^H_t(z_1)-\varphi_t^H(z_2)\rVert=\rVert z_1-z_2\rVert+L\int_0^t\lVert\varphi_s^H(z_1)-\varphi_s^H(z_2)\rVert ds$$ Now Gronwall's lemma gives the fact that $\varphi^H_t$ (and thus $(\varphi_t^H)^{-1}=\varphi_{-t}^H$) is Lipschitz.
	%Now if $\varphi_t^H$ is in $DL(\R^{2n})$, we see that $(\varphi_t^H)^{-1}$ will be in $DL(\R^{2n})$ using the formula for $\bar H$. Moreover if $\varphi_t^H$ and $\varphi_t^K$ are in $DL(\R^{2n})$ then $\varphi_t^H\circ\varphi_t^K$ will be Lipschitz in space by composition, and the same is true for $(\varphi_t^H)^{-1}\circ(\varphi_t^K)^{-1}$. Using the formula for $H\#K(t,z)$ we see that it is also Lipschitz so $\varphi_t^H\circ\varphi_t^K\in DL(\R^{2n})$. 
\end{proof}

\end{document}